\newtheorem{thm}{Theorem}[section]
\newtheorem{lem}[thm]{Lemma}
\newtheorem{defn}[thm]{Definition}
\theoremstyle{remark}
\newtheorem{rem}[thm]{Remark}
\numberwithin{equation}{section}
\newcommand{\al}{\alpha}
\def\lz{\lambda}
\def\Lz{\Lambda}
\def\({\Bigl(}
\def \){ \Bigr)}
\def\sub{\substack}
 \def\lz{{\lambda}}
 \def\RR{{\mathbb R}}
\def\ss{{\Bbb S}^{d}}
\def\BB{\Bbb B^d}
\def\Lz{\Lambda}
\def\Lz{\Lambda}
\begin{document}
\def\RR{\mathbb{R}}
\def\Exp{\text{Exp}}
\def\FF{\mathcal{F}_\al}

\title[] {Weighted $\ell_q$ approximation problems on the ball and on the sphere}

\author[]{Jiansong Li} \address{ School of Mathematical Sciences, Capital Normal
University, Beijing 100048,
 China}
\email{2210501007@cnu.edu.cn}

\author[]{Heping Wang} \address{ School of Mathematical Sciences, Capital Normal
University, Beijing 100048,
 China}
\email{wanghp@cnu.edu.cn}

\keywords{Marcinkiewicz-Zygmund families;  Weighted Sobolev
spaces; Jacobi weight; Weighted least $\ell_q$ approximation;
Ball; Sphere}

\subjclass[2010]{26D05, 41A17, 41A55, 65D15, 65D32}

\begin{abstract} Let $L_{q,\mu},\, 1\le q<\infty, \ \mu\ge0,$ denote the weighted $L_q$ space with the classical Jacobi weight $w_\mu$ on the ball $\BB$.
We consider the weighted least $\ell_q$ approximation problem for
 a given $L_{q,\mu}$-Marcinkiewicz-Zygmund family  on $\BB$. We
 obtain the weighted least $\ell_q$ approximation errors  for
 the weighted Sobolev space $W_{q,\mu}^r$, $r>(d+2\mu)/q$,  which are order optimal.  We also discuss the least squares quadrature
induced by an $L_{2,\mu}$-Marcinkiewicz-Zygmund family, and get
the quadrature errors  for $W_{2,\mu}^r$, $r>(d+2\mu)/2$, which
are also order optimal. Meanwhile, we give the corresponding the
weighted least $\ell_q$ approximation theorem and the least
squares quadrature errors on the sphere.
\end{abstract}

\maketitle
\input amssym.def

\section{Introduction }

\

Let $\mathbb{B}^{d}=\{x\in \mathbb{R}^d:\,|x|\leq 1\}$ denote the
unit ball of $\mathbb{R}^d$, where $x\cdot y$ is the usual inner
product and $|x|=(x\cdot x)^{1/2}$ is the usual Euclidean norm.
Let $\Pi_n^{d}$ be the space of all polynomials in $d$ variables
of total degree at most $n$. For the classical Jacobi weight on
$\mathbb{B}^{d}$ $$w_\mu(x)=b_{d}^\mu(1-|x|^2)^{\mu-1/2},\ \
\mu\ge0,\ \  b_{d}^\mu=\Big(\int_{\Bbb
B^d}(1-|x|^2)^{\mu-1/2}dx\Big)^{-1},\
$$ we denote by $L_{p,\mu}\equiv L_p(\BB,w_\mu(x)dx), \ 0<
p< \infty,$
 the space of all Lebesgue measurable functions $f$ on
$\mathbb{B}^{d}$ with finite quasi-norm
$$\|f\|_{p,\mu}:=\Big(\int_{\mathbb{B}^{d}}|f(x)|^pw_\mu(x)dx\Big)^{1/p}.$$ When $p=\infty$ we consider the space of
continuous functions $C(\Bbb B^{d})$ with the uniform norm. In
particular, $L_{2,\mu}$ is a Hilbert space with inner product
$$\langle f,g\rangle_\mu:=\int_{\Bbb B^d}f(x)g(x)w_\mu(x)dx,\ {\rm
for}\ f,g\in L_{2,\mu}.$$

 For  $n\in \Bbb N$ we
take $l_n$ points in $\Bbb B^d$ and  $l_n$ positive numbers
$$\mathcal{X}_n:=\{x_{n,k}:\, k=1,2,\dots,l_n\}\ {\rm and}\  \tau_n:=\{\tau_{n,k}:\,k=1,2,\dots,l_n\},$$ and assume
that $l_n\to\infty$ as $n\to\infty$. This  yields a family
$(\mathcal{X},\tau):=(\mathcal{X}_n, \tau_n)_{n\ge1}$. We assume
that the family $(\mathcal{X},\tau)$ constitutes a
Marcinkiewicz-Zygmund (MZ) family on $\BB$ defined as follows.

\begin{defn}\label{def1.1}
Suppose that
$\mathcal{X}=\{\mathcal{X}_n\}=\{x_{n,k}:\,k=1,2,\dots,l_n,\,n=1,2,\dots\}$
is a doubly-indexed set of points in $\Bbb B^d$, and
 $\tau=\{\tau_n\}=\{\tau_{n,k}:\,k=1,2,\dots,l_n,\,n=1,2,\dots\}$ is a doubly-indexed set of positive numbers.  Then for $0< q<\infty$, the family $(\mathcal{X},\tau)$ is
 called an $L_{q,\mu}$-Marcinkiewicz-Zygmund family, denoted by $L_{q,\mu}$-MZ, if there exist constants $A,\, B >0$ independent
of $n$ such that
\begin{equation}\label{1.1}
  A\|P\|_{q,\mu}^q\le\sum\limits_{k=1}^{l_n}|P(x_{n,k})|^q\tau_{n,k}\le B\|P\|_{q,\mu}^q,\ {\rm for\ all}\ P\in \Pi_n^d.
\end{equation}

The ratio $\kappa=B/A$ is the global condition number of
$L_{q,\mu}$-MZ family $(\mathcal{X},\tau)$, and
$\mathcal{X}_n=\{x_{n,k}:\,k=1,2,\dots,l_n\}$ is the $n$-th layer of
$\mathcal{X}$.
\end{defn}
\begin{rem}\label{rem1.1}For $q=\infty$, we say that the family $(\mathcal{X},\tau)$ is $L_{\infty}$-MZ if there exist constants $A>0$ independent
of $n$ such that $$
  A\|P\|_{\infty}\le\max\limits_{1\le k\le l_n}|P(x_{n,k})|\le \|P\|_\infty\ {\rm for\ all}\ P\in \Pi_n^d.
$$The global condition number of
$L_{\infty}$-MZ family is $1/A$.\end{rem}

It follows from \cite{DaX,PX} that such $L_{q,\mu}$-MZ families on
$\BB$ exist. Necessary density conditions for $L_{2,\mu}$-MZ
families on $\BB$ were obtained in \cite{AMO,BO}. There are many
papers devoted to studying MZ families on the sphere and compact
manifold (see \cite{FM,M, MP, OP, W1}).

\begin{rem}\label{rem1.2} It follows from \eqref{1.1} that if $P\in\Pi_n^d$ and $P(x_{n,k})=0,\ k=1,\dots,l_n$, then $P=0$. This means that
usually $\mathcal X_n$ contains more than $\dim \Pi_n^d$ points,
so that it is not an interpolating set for $\Pi_n^d$. We set
\begin{equation}\label{4.1}
  \mu_n:=\sum\limits_{k=1}^{l_n}\tau_{n,k}\delta_{x_{n,k}},
\end{equation}where $\delta_z(g)=g(z)$ for a function $g$ is the evaluation operator.
For any $f\in C(\Bbb B^d)$, we define for $0<q<\infty$,
$$\|f\|_{(q)}:=\Big(\int_{\Bbb
B^d}|f(x)|^qd\mu_n(x)\Big)^{1/q}=\Big(\sum\limits_{k=1}^{l_n}|f(x_{n,k})|^q\tau_{n,k}\Big)^{1/q},$$
and for  $q=\infty$,  $$\|f\|_{(\infty)}:=\max\limits_{1\le k\le
l_n}|f(x_{n,k})|.$$ Hence, for $0<q\le \infty$,
$(\Pi_n^d,\|\cdot\|_{(q)})$ is a Frechet space. It follows from
\eqref{1.1} that the $L_{q,\mu}$-norm of a polynomial of degree at
most $n$ on $\Bbb B^d$ is comparable to the discrete version given
by the weighted $\ell_q$-norm of its restriction to
$\mathcal{X}_n$.
\end{rem}

This paper is concerned with constructive polynomial approximation
on $\BB$ which uses function values (the samples) at the  points
in $\mathcal{X}_n$ (sometimes called standard information).
 For an $L_{q,\mu}$-MZ family $(\mathcal{X},\tau)$, we
usually sample a continuous function $f$ on the $n$-th layer
$\mathcal{X}_n$ and apply the samples to construct an
approximation to $f$. We use the weighted least $\ell_q$
algorithms.  This means that our problem is to solve a sequence of
weighted least $\ell_q$ approximation problems with samples taken
from the samples $\mathcal{X}_n$. We recall the following
definition.
\begin{defn}\label{def1.3}
Let $0< q\le\infty$, and let $(\mathcal{X},\tau)$ be an
$L_{q,\mu}$-MZ family. For $f\in C(\Bbb B^d)$, we define the
weighted least $\ell_q$ approximation by
\begin{align}\label{1.2}
         L_{n,q}(f)&:=\arg\min\limits_{P\in\Pi_n^d}\,\Big(\sum\limits_{k=1}^{l_n}|f(x_{n,k})-P(x_{n,k})|^q\tau_{n,k}\Big)^{1/q}.
 \end{align}That is,  $L_{n,q}(f)$ is any function in $\Pi_n^d$ satisfying
 $$\|f-L_{n,q}(f)\|_{(q)}=\min_{P\in\Pi_n^d}\|f-P\|_{(q)}.$$
\end{defn}

\begin{rem}\label{rem1.4}
Clearly, for $f\in C(\Bbb B^d)$ and $0<q<\infty$, the minimizer
$L_{n,q}(f)$ exists. Hence, this definition is well defined. For
$1< q<\infty$, $L_{n,q}(f)$ is unique. However, if $0<q\le1$ or
$q=\infty$, then $L_{n,q}(f)$ may be not unique.
\end{rem}

\begin{rem}\label{rem1.5}
If $q\neq 2$, then  $L_{n,q}$ is not linear. That is, there exist
$f_1,f_2\in C(\Bbb B^d)$ such that
$$L_{n,q}(f_1+f_2)\neq L_{n,q}(f_1)+L_{n,q}(f_2).$$ However,
 for  $f\in C(\Bbb B^d)$ and $P_n\in\Pi_n^d$,  by the definition of $L_{n,q}(f)$ we have
\begin{align}\label{1.3}
 \|f-L_{n,q}(f)\|_{(q)}&=\min\limits_{P\in\Pi_n^d}\|f-P\|_{(q)}\notag \\ &= \|(f+P_n)-L_{n,q}(f+P_n)\|_{(q)}\notag\\&\le \|f+P_n\|_{(q)}.
\end{align}
\end{rem}

The most interesting  case is $q=2$ in which $L_{n,2}$ has many
good properties. Indeed, $L_{n,2}$ is a bounded linear operator on
$C(\BB)$ satisfying that $L_{n,2}^2=L_{n,2}$, and the range of
$L_{n,2}$ is $\Pi_n^d$. If we define the discretized inner product
on $C(\BB)$ by
$$\langle
f,g\rangle_{(2)}:=\sum\limits_{k=1}^{l_n}\tau_{n,k}f(x_{n,k})g(x_{n,k}),$$
then $L_{n,2}$ is just the orthogonal projection onto $\Pi_n^d$
with respect to the discretized inner product $\langle
\cdot,\cdot\rangle_{(2)}$. Hence, we obtain  for $f\in C(\Bbb
B^d)$,
\begin{equation*}
  L_{n,2}(f)(x)=\langle f,
  D_n(x,\cdot)\rangle_{(2)}=\sum_{k=1}^{l_n}\tau_{n,k}f(x_{n,k})D_n(x,x_{n,k}),
\end{equation*}where $D_n(x,y)$ is the reproducing kernel of $\Pi_n^d$ with respect to the discretized inner product $\langle\cdot,\cdot\rangle_{(2)}$.
We call  $L_{n,2}(f)$   the weighted least squares polynomial, and
$L_{n,2}$ the weighted least squares operator.

Following Gr\"ochenig  in \cite{G},  for $L_{2,\mu}$-MZ family we can
use the frame theory to construct the  quadrature on $\BB$
$$I_n(f)=\sum_{k=1}^{l_n}w_{n,k}f(x_{n,k}).$$ It was shown in
\cite{LW} that
$$w_{n,k}=\tau_{n,k}\int_{\BB}D_n(x,x_{n,k})w_\mu(x)dx, $$and
\begin{equation}\label{1.5}I_n(f)=\int_{\BB}L_{n,2}(f)(x)w_\mu(x)dx.\end{equation}Such
quadrature $I_n$ is called the  least squares  quadrature.

Given an $L_2$-MZ family $(\mathcal{X},\tau)$ on a usual compact
space $\mathcal{M}$ with some structure, Gr\"ochenig in \cite{G}
studied the weighted least squares  approximation
$L_{n,2}^{\mathcal M}(f)$ and the least squares quadrature
$I_n^{\mathcal M}$ from the samples $\mathcal{X}_n$ of a function
$f$, and obtained the following approximation theorems and
quadrature errors as follows. For $r>d/2$, we have
\begin{equation}\label{1.6}
  \|f-L_{n,2}^{\mathcal M}(f)\|_{2}\le
c(1+\kappa^2)^{1/2}n^{-r+d/2}\|f\|_{H^r(\mathcal{M})},
\end{equation}
and
\begin{equation}\label{1.7}
  \Big|\int_{\mathcal M}f(x)d\mu(x)-I_n^{\mathcal M}(f)\Big|\le
c(1+\kappa^{1/2})n^{-r+d/2}\|f\|_{H^r(\mathcal{M})},
\end{equation}
where $c$ depends on $d$, $r$, but not on $f$, $\kappa$ or
$(\mathcal{X},\tau)$, $\mu$ is a probability measure on $\mathcal
M$, and $H^r(\mathcal{M})$ is the Sobolev space on $\mathcal{M}$
(see \cite{G}). However, the obtained error estimates are not
optimal due to the generality of $\mathcal{M}$. Lu and Wang in
\cite{LW} investigated the weighted least squares approximation
$L_{n,2}^{\Bbb S}$  and the least squares quadrature $I_{n}^{\Bbb
S}$  on the sphere $\Bbb S^d$, and obtained the following optimal
error estimates. For $r>d/2$,  we have
\begin{equation}\label{1.7-0} \|f-L_{n,2}^{\Bbb S}(f)\|_{2}\le
c(1+\kappa^2)^{1/2}n^{-r}\|f\|_{H^r(\Bbb S^d)},\end{equation} and
\begin{equation}\label{1.7-1}\Big|\int_{\ss}f(x)d\sigma (x)-I_n^{\Bbb S}(f)\Big|\le
c(1+\kappa^2)^{1/2}n^{-r}\|f\|_{H^r(\ss)},\end{equation} where $c$
depends only on $d$ and  $r$,  and $H^r(\Bbb S^d)$ is the Sobolev
space on $\Bbb S^d$.

 Gr\"ochenig commented in \cite{G} that  Marcinkiewicz-Zygmund families with respect to general
$q$-norms seemed to require different techniques. In this paper we
consider $L_{q,\mu}$-MZ families  on $\BB$ for $1\le q<\infty$. We
use the weighted least $\ell_q$ approximation to obtain the
optimal approximation errors of the weighted Sobolev classes
$BW_{q,\mu}^r$ on $\BB$ (see Section 2 for definition of
$BW_{q,\mu}^r$). The techniques we used are different from the
ones in \cite{G} even in the case $q=2$.
 We remark that $W_{2,\mu}^r$ is just
the Sobolev space $H^r(\BB)$ given in \cite{G}, and if
$r>(d+2\mu)/q$, then the weighted Sobolev space $W_{q,\mu}^r$ can
be compactly embedded into $C(\BB)$.  Our main results can be
formulated as follows.

\begin{thm}\label{thm1.6} Let $1\le p\le \infty, \,1\le q<\infty$, and $\mu\ge 0$. Suppose that $(\mathcal{X},\tau)$
is an $L_{q,\mu}$-MZ family with global condition number
$\kappa=B/A$, $L_{n,q}$ is the weighted least $\ell_q$
approximation  defined by \eqref{1.2}. If $f\in W_{p,\mu}^r$,
$r>(d+2\mu)\max\{1/p,1/q\}$, then we have
\begin{equation}\label{1.8}
  \|f-L_{n,q}(f)\|_{q,\mu}\le C(1+\kappa^{1/q})n^{-r+(d+2\mu)(1/p-1/q)_+}\|f\|_{W_{p,\mu}^r},
\end{equation}where $C>0$ are independent of  $f$, $n$,
$\kappa$, and  $(\mathcal{X},\tau)$, and  $a_+=\max\{a,0\}$.
\end{thm}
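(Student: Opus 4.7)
The plan is to combine quasi-optimality of $L_{n,q}$ in the discrete norm with sharp polynomial approximation of $f$ and a dyadic decomposition of the residual. First I would establish a quasi-optimality inequality: for any $P\in\Pi_n^d$,
\begin{equation*}
\|f-L_{n,q}(f)\|_{q,\mu}\le\|f-P\|_{q,\mu}+2A^{-1/q}\|f-P\|_{(q)}.
\end{equation*}
This is immediate from the triangle inequality, the lower half of \eqref{1.1} applied to the polynomial $P-L_{n,q}(f)\in\Pi_n^d$, and the defining minimality $\|f-L_{n,q}(f)\|_{(q)}\le\|f-P\|_{(q)}$. It reduces the problem to producing a single polynomial $P\in\Pi_n^d$ whose residual is small simultaneously in $L_{q,\mu}$ and in the discrete norm.

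For such a $P$ I would take $V_n f\in\Pi_n^d$, a near-best polynomial approximation of $f$ obtained from a de la Vall\'ee Poussin-type convolution on $\BB$ that is uniformly bounded on every $L_{s,\mu}$, $1\le s\le\infty$, and reproduces polynomials of degree $\le n/2$. Weighted Jackson theorems on the ball, combined (when $p<q$) with the Sobolev embedding $W_{p,\mu}^r\hookrightarrow L_{q,\mu}$, yield the continuous bound
$$\|f-V_n f\|_{q,\mu}\le Cn^{-r+(d+2\mu)(1/p-1/q)_+}\|f\|_{W_{p,\mu}^r},$$
matching the first term of the quasi-optimality inequality. For the discrete norm I would use a dyadic splitting
$$f-V_n f=\sum_{j\ge 0}\eta_j,\qquad\eta_j:=V_{2^{j+1}n}f-V_{2^j n}f\in\Pi_{2^{j+1}n}^d,$$
with $\|\eta_j\|_{q,\mu}\le C(2^j n)^{-r+(d+2\mu)(1/p-1/q)_+}\|f\|_{W_{p,\mu}^r}$, and bound $\|f-V_n f\|_{(q)}\le\sum_{j\ge 0}\|\eta_j\|_{(q)}$ by Minkowski's inequality.

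The main obstacle is the estimation of $\|\eta_j\|_{(q)}$. The crude bound $\|\eta_j\|_{(q)}\le B^{1/q}\|\eta_j\|_\infty$, which follows from $\sum_k\tau_{n,k}\le B$ (apply the upper half of \eqref{1.1} to the constant polynomial), combined with the Nikolskii inequality $\|\eta_j\|_\infty\le C(2^j n)^{(d+2\mu)/q}\|\eta_j\|_{q,\mu}$, delivers only the suboptimal rate $n^{-r+(d+2\mu)\max\{1/p,1/q\}}$, short by a factor $n^{(d+2\mu)/q}$. To recover the claimed exponent I would prove a refined subordination inequality
\begin{equation*}
\|\eta_j\|_{(q)}\le CB^{1/q}(2^j)^{(d+2\mu)/q}\|\eta_j\|_{q,\mu},
\end{equation*}
essentially an extension of \eqref{1.1} to polynomials of degree $>n$ with a loss depending only on the ratio of degrees. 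This can be achieved by a Christoffel-function argument: for an $L_{q,\mu}$-MZ family on $\BB$ the weights $\tau_{n,k}$ are comparable to the level-$n$ weighted Christoffel function at $x_{n,k}$, so a local polynomial bound on balls of radius $\sim(2^j n)^{-1}$ centered at the $x_{n,k}$ sums with bounded overlap to the displayed estimate. Plugging into the dyadic series, the exponent of $2^j$ becomes $-(r-(d+2\mu)\max\{1/p,1/q\})$, strictly negative by hypothesis, so the series converges and $\|f-V_n f\|_{(q)}\le CB^{1/q}n^{-r+(d+2\mu)(1/p-1/q)_+}\|f\|_{W_{p,\mu}^r}$. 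Combined with the continuous bound and the identity $A^{-1/q}B^{1/q}=\kappa^{1/q}$, the quasi-optimality inequality delivers \eqref{1.8}.
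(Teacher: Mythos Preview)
Your proposal is correct and follows essentially the same route as the paper: quasi-optimality reduction, dyadic decomposition of $f-V_nf$, and the key ``refined subordination inequality'' $\|\eta_j\|_{(q)}\lesssim B^{1/q}(2^j)^{(d+2\mu)/q}\|\eta_j\|_{q,\mu}$, which is exactly the paper's Lemma~\ref{lem2.2}. The paper proves that lemma by first deducing from the upper half of \eqref{1.1} a regularity condition on the weights (Lemma~\ref{lem2.1})---equivalent to your Christoffel-function comparability claim---and then combining it with the localized kernel bound \eqref{2.5} for the filtered projector (Lemma~\ref{lem2.3}); your Christoffel/local-polynomial sketch amounts to the same argument.
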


The following theorem   follows from Theorem \ref{thm1.6} and
\eqref{1.5} immediately.

\begin{thm}\label{thm1.7} Let $\mu\ge 0$. Suppose that $(\mathcal{X},\tau)$
is an $L_{2,\mu}$-MZ family with global condition number
$\kappa=B/A$, $L_{n,2}$ and $I_n$ are the weighted least squares
approximation   and the  least squares quadrature, respectively.
If $f\in H^r(\BB)\equiv W_{2,\mu}^r$, $r>(d+2\mu)/2$, then we have
\begin{equation}\label{1.9}
  \|f-L_{n,2}(f)\|_{2,\mu}\le C(1+\kappa^{1/2})n^{-r}\|f\|_{H^r(\BB)},
\end{equation}and
\begin{equation}\label{1.10}
  \Big|\int_{\BB}f(x)w_\mu(x)dx-I_n(f)\Big|\le C(1+\kappa^{1/2})n^{-r}\|f\|_{H^r(\BB)},
\end{equation}
where $C>0$ are independent of  $f$, $n$, $\kappa$, and
$(\mathcal{X},\tau)$.
\end{thm}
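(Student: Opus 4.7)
The plan is to derive both bounds as direct consequences of Theorem \ref{thm1.6} together with the identity \eqref{1.5}. The first estimate is essentially a specialization, while the second is a one-line argument using the fact that $w_\mu$ is a probability density.

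First, I would apply Theorem \ref{thm1.6} with $p=q=2$. Under that choice the hypothesis $r > (d+2\mu)\max\{1/p,1/q\}$ becomes $r > (d+2\mu)/2$, which matches the assumption in Theorem \ref{thm1.7}. The exponent $(d+2\mu)(1/p-1/q)_+$ collapses to $0$ and the factor $1+\kappa^{1/q}$ becomes $1+\kappa^{1/2}$, yielding \eqref{1.9} immediately. I would also remark that $W_{2,\mu}^r$ coincides with $H^r(\BB)$, so the norm on the right of \eqref{1.8} is exactly $\|f\|_{H^r(\BB)}$.

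For the quadrature error, I would use \eqref{1.5}, which asserts that $I_n(f)=\int_{\BB}L_{n,2}(f)(x)w_\mu(x)\,dx$. Since $f\in H^r(\BB)$ with $r>(d+2\mu)/2$ embeds continuously into $C(\BB)$, both $\int_{\BB}f(x)w_\mu(x)\,dx$ and $I_n(f)$ are well defined, and by linearity of the integral
\begin{equation*}
  \int_{\BB}f(x)w_\mu(x)\,dx - I_n(f) = \int_{\BB}\bigl(f(x)-L_{n,2}(f)(x)\bigr)w_\mu(x)\,dx.
\end{equation*}
An application of the Cauchy--Schwarz inequality with respect to the measure $w_\mu(x)\,dx$ gives
\begin{equation*}
  \Bigl|\int_{\BB}f(x)w_\mu(x)\,dx - I_n(f)\Bigr| \le \|f-L_{n,2}(f)\|_{2,\mu}\,\|1\|_{2,\mu}.
\end{equation*}
Because $w_\mu$ is normalized by $b_d^\mu$ so that $\int_{\BB}w_\mu(x)\,dx=1$, we have $\|1\|_{2,\mu}=1$, and inserting \eqref{1.9} produces \eqref{1.10}.

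There is essentially no obstacle in this deduction; the only points requiring any care are verifying that the range of parameters in Theorem \ref{thm1.6} actually contains the case $p=q=2$, and observing that $w_\mu$ has total mass $1$ so that the Cauchy--Schwarz step introduces no additional constant. The substantive work lies entirely in Theorem \ref{thm1.6}, not here.
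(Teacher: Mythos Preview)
Your proposal is correct and matches the paper's own argument exactly: the paper states that Theorem \ref{thm1.7} ``follows from Theorem \ref{thm1.6} and \eqref{1.5} immediately,'' which is precisely your specialization $p=q=2$ for \eqref{1.9} and the integral identity plus Cauchy--Schwarz for \eqref{1.10}. The only difference is that you spell out the Cauchy--Schwarz step and the normalization $\|1\|_{2,\mu}=1$, which the paper leaves implicit.
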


We also give the corresponding weighted least $\ell_q$
approximation and least squares quadrature results on $\Bbb S^d$.

The contribution of this paper contains three aspects. First, we
obtain the corresponding results for Marcinkiewicz-Zygmund
families with respect to general $q$-norms by a different method
from \cite{G}. Second, the obtained error estimates \eqref{1.8}
for $1\le q= p<\infty$ and \eqref{1.10} are asymptotically optimal
 in a variety of
Sobolev space settings (as explained in Remark \ref{rem1.8}
below). Third, we reduce dependence on the global condition number
in  \eqref{1.9}  by replacing the constant $(1+\kappa^2)^{1/2}$ in
\eqref{1.6} with  the constant $1+\kappa^{1/2}$.

Throughout the paper, the notation $a_{n}\asymp b_{n}$ means $a_{n}\lesssim b_{n}$  and $a_{n}\gtrsim b_{n}$. Here, $a_{n}\lesssim b_{n}\,(a_{n}\gtrsim b_{n})$ means that there exists a constant $c>0$ independent of $n$ such that $a_{n}\leq c b_{n}\,(b_{n}\leq c a_{n})$.

\begin{rem}\label{rem1.8} Let $F$ be  a class of continuous functions on $D$, and $(X,\|\cdot\|_X)$ be a normed linear space of functions on
$D$, where $D$ is a subset of $\Bbb R^d$, $\nu$ is a probability
measure on $D$. For $N\in \Bbb N$, the sampling numbers (or the
optimal recovery) of $F$ in $X$ are defined by
$$g_N(F,X) \ :=\inf_{\sub{\xi_1,\dots,\xi_N\in D\\ \varphi:\ \Bbb R^N\rightarrow X}}
\sup_{f\in F}\|f-\varphi(f(\xi_1),\dots,f(\xi_N))\|_X,
$$where the infimum is taken over all $N$ points
$\xi_1,\dots,\xi_N$ in $D$ and all  mappings $\varphi$ from $\Bbb
R^N$ to $X$. And the optimal quadrature errors of $F$ are defined
by
$$e_N(F;{\rm INT}):=\inf_{\sub{\xi_1,\dots,\xi_N\in D\\ \lz_1,\dots,\lz_N\in\Bbb R}}
\sup_{f\in
F}\Big|\int_{D}f(x)d\nu(x)-\sum\limits_{j=1}^N\lambda_jf(\xi_j)\Big|,
$$ where the infimum is taken over all $N$ points
$\xi_1,\dots,\xi_N$ in $D$ and all $N$  numbers
$\lz_1,\dots,\lz_N$.

It follows from \cite{PX} that there exist $L_{q,\mu}$-MZ families
on $\BB$ with $l_n\asymp N\asymp n^d$. Combining with
\cite[Theorem 3.5]{LW2}, for such $L_{q,\mu}$-MZ family, and $1\le
q\le p<\infty$, $r>(d+2\mu)/q$, we obtain,
\begin{align*}\sup_{f\in BW_{p,\mu}^{r}}\|f-L_{n,q}(f)\|_{q,\mu}\asymp N^{-r/d}\asymp g_N(BW_{p,\mu }^{r},L_{q,\mu}),\end{align*}
which implies that
 the weighted least $\ell_q$ approximation operators $L_{n,q}$ are asymptotically optimal algorithms in the sense of optimal recovery.

Also, for the least squares quadrature rules $I_n$, it follows
from \eqref{1.10} and \cite[Theorem 1.1]{LW2} that for
$r>(d+2\mu)/2$,
$$ \sup\limits_{f\in BH^{r}(\BB)}\Big|\int_{\BB}f(x)w_\mu(x)dx-I_n(f)\Big|\asymp N^{-r/d}\asymp e_N(BH^{r}(\BB);{\rm INT}),$$which means that the  least squares quadrature
rules  $I_n$ are the asymptotically optimal quadrature formulas
for $BH^{r}(\BB)$.
\end{rem}

The outline of this paper is as follows. In the next section, we
recall some basic results about harmonic analysis on the ball,
introduce the filtered approximation and prove some axillary
lemmas. In Section 3 we give the proof of Theorem \ref{thm1.6}.
Finally, in Section 4 we give the corresponding  weighted least
$\ell_q$ approximation and least squares quadrature results on
$\Bbb S^d$.

\section{Preliminaries}

\subsection{Harmonic analysis on the ball }

\

Let $\mathbb{B}^{d}$ denote the unit ball of $\mathbb{R}^d$, and
$L_{p,\mu}$,  $1\le p< \infty,$
 the weighted $L_p$ space on
$\mathbb{B}^{d}$.
We denote  by $\mathcal{V}_n^d(w_\mu)$ the space of all  polynomials of degree $n$
which are orthogonal to lower degree polynomials in $L_{2,\mu}$, and   $$a_n^d:={\rm dim}\,\mathcal{V}_n^d(w_\mu)=\binom{n+d-1}{n}\asymp n^{d-1}.$$
 It is well known (see \cite[p.38 or p.229]{DX}) that
 $$D_\mu P=-n(n+2\mu+d-1)P,\ \ {\rm for\ all}\ P\in\mathcal{V}_n^d(w_\mu),$$where the second-order differential
operator
$$
 D_\mu:=\triangle-(x\cdot\nabla)^2-(2\mu+d-1)x\cdot \nabla,
$$
 and $\triangle$, $\nabla$ are the Laplace operator, the gradient operator, respectively.

The standard Hilbert theory states that the spaces
$\mathcal{V}_n^d(w_\mu)$ are mutually orthogonal in $L_{2,\mu}$.
Let $\{\phi_{nk}\equiv\phi_{nk}^d: k=1,2,\dots,a_n^d\}$ be a fixed
orthonormal basis for $\mathcal{V}_n^d(w_\mu)$. Then
$$\{\phi_{nk}: k=1,2,\dots,a_n^d,\ n=0,1,2,\dots\}$$ is an
orthonormal basis for $L_{2,\mu}$.

  The orthogonal projector ${\rm Proj}_n :\, L_{2,\mu}\rightarrow \mathcal{V}_n^d(w_\mu)$ can be written
 as\begin{align*}
     ({\rm Proj}_n f)(x)&=\sum\limits_{k=1}^{a_n^d}\langle f, \phi_{nk}\rangle\phi_{nk}(x)
     =\langle f,P_n(w_\mu; x,\cdot)\rangle_\mu,\ \ x\in\BB,
   \end{align*}
 where $P_n(w_\mu; x, y)=\sum\limits_{k=1}^{a_n^d}\phi_{nk}(x)\phi_{nk}(y)$ is the reproducing kernel of $\mathcal{V}_n^d(w_\mu)$.
See \cite{DX} for more details about $P_n(w_\mu; x, y)$.

 Given $r>0$, define the fractional power $(-D_\mu)^{r/2}$ of the operator $-D_\mu$ on $f$ by
$$(-D_\mu)^{r/2}(f):=\sum\limits_{k=0}^\infty (k(k+2\mu+d-1))^{r/2}{\rm Proj}_kf,$$in the sense of distribution.
Using this operator we define the weighted Sobolev space as follows: for $r>0$ and $1\le p\le\infty$,
$$W_{p,\mu}^r:=\{f\in L_{p,\mu}:\|f\|_{W_{p,\mu}^r}:=\|f\|_{p,\mu}+\|(-D_\mu)^{r/2}(f)\|_{p,\mu}<\infty\},$$
while the weighted Sobolev class $BW_{p,\mu}^r$ is defined to be
the unit ball of the weighted Sobolev space $W_{p,\mu}^r$. Note
that if $r>(d+2\mu)/p$, then $W_{p,\mu}^r$ is compactly embedded
into $C(\mathbb{B}^{d})$.

We introduce a  metric $d$ on $\mathbb{B}^d$ by
$$ d(x,y):= \arccos \Big( (x,y)+\sqrt{1-|x|^2}\sqrt{1-|y|^2}\Big),\ \ x,\, y\in\BB.$$
For $r>0,\ x\in \mathbb{B}^d$ and a positive integer $n$,  we set
$${\bf B}(x,r):=\{y\in \mathbb{B}^d\ |  \ d(x,y)\le r\}\ \ \ {\rm and} \ \  \
 \mathcal{W}_\mu(n;x):=(\sqrt{1-|x|^2}+n^{-1})^{2\mu}.$$
It follows from \cite[Equation (4.23)]{PX} that
\begin{equation}\label{2.1}
\frac
{1}{2^\mu(1+nd(x,y))^{2\mu}}\leq\frac{\mathcal{W_\mu}(n;x)}{\mathcal{W_\mu}(n;y)}\leq{2^\mu(1+nd(x,y))^{2\mu}},\
x, y\in\mathbb{B}^{d}.
\end{equation}

For $\epsilon>0$, we say that a finite subset $\Lambda\subset\mathbb{B}^d$ is maximal $\epsilon$-separated if
$$\mathbb{B}^d\subset\bigcup\limits_{y\in\Lambda}{\bf B}(y,\epsilon) \ {\rm\ and}\ \min\limits_{y\neq y^\prime}d(y,y^\prime)\ge \epsilon.$$
Such a maximal $\epsilon$-separated set $\Lambda$ exists  and
satisfies (see \cite[Lemma 5.2]{PX})
\begin{equation}\label{2.2}
  1\le\sum\limits_{\xi\in\Lambda}\chi_{{\bf B}(\xi,\epsilon)}(x)\le C_d,\ {\rm for\ any}\ x\in\Bbb B^d.
\end{equation}

\subsection{The filtered approximation}

\

Now we introduce the filtered approximation on the ball as in
\cite{PX}. In the filtered approximation, the terms in the Fourier
series are to be modified by multiplication by $\eta (k/n)$, where
$\eta \in C^\infty([0,\infty))$ is a ``$C^\infty$-filter''
satisfying
$$\chi_{\left[0,1\right ]}\leq\eta\leq\chi_{\left[0,2\right]}.$$Here, $\chi_A$ denotes the characteristic function of $A$ for
$A\subset \Bbb R$.
The filtered approximation of $f$ is defined by
\begin{align}
V_{n}(f)(x)&:= \sum_{k=0}^{\infty}\eta (\frac kn){\rm
Proj}_{k}(f)(x)=\langle f,
K_{n,\eta}(x,\cdot)\rangle_\mu,\label{2.3}
\end{align}where \begin{equation*}K_{n,\eta}(x,y)=\sum_{k=0}^\infty \eta(\frac kn)P_k(w_\mu; x,y)
=\sum_{k=0}^{2n-1} \eta(\frac kn)P_k(w_\mu; x,y).\end{equation*}
Evidently, $V_n(f)\in\Pi_{2n-1}^d$,  and for any $f\in\Pi_n^d$ we
have
\begin{equation}\label{2.4}f(x)=\int_{\Bbb B^d}f(y)K_{n,\eta}(w_\mu;x,y)w_\mu(y)dy,\ x\in
 \Bbb B^d.\end{equation}

It follows from \cite[Theorem 4.2]{PX} that for $x,y\in\Bbb B^d$,
\begin{equation}\label{2.5}
  |K_{n,\eta}(x,y)|\lesssim
  \frac{n^d}{\sqrt{\mathcal{W}_\mu(n;x)}\sqrt{\mathcal{W}_\mu(n;y)}(1+nd(x,y))^{d+\mu+1}}=:g_n(x,y).
\end{equation}By  \eqref{2.5} and \cite[Lemma
4.6]{PX} we have
\begin{equation}\label{2.5-0}\max_{y\in\BB}\int_{\BB}|K_{n,\eta}(x,y)|w_\mu(x)dx\lesssim
\max_{y\in\BB}\int_{\BB}g_n(x,y)w_\mu(x)dx\lesssim
1.\end{equation}

For $n\in\mathbb{N}$ we define
$$E_n(f)_{p,\mu}:=\inf\{\|f-P\|_{p,\mu}:P\in\Pi_n^d\}
.$$ It is well known (see \cite{PX, Xu}) that for $1\le
p\le\infty$, $r>0$, and $f\in W_{p,\mu}^r$,
\begin{equation}\label{2.6}
  \|f-V_n(f)\|_{p,\mu}\lesssim E_n(f)_{p,\mu}\lesssim n^{-r}\|f\|_{W_{p,\mu}^{r}}.
\end{equation}

\subsection{Some auxiliary lemmas}

\

In this subsection, we give some  lemmas which will be needed  in
the next section.

Let $n$ be a positive integer. Suppose that $\Omega_n$ is a finite
set of $\mathbb{B}^d$, and $\Gamma_n:=\{\mu _{\omega
}:\,\omega\in\Omega_n\}$ is a set of positive numbers.
 The induced measure $\lambda_n$ by $\Gamma_n$ is defined by
\begin{equation}\label{2.7}
\lambda_n:=\sum\limits_{\omega\in\Omega_n}\mu_{\omega}\delta_\omega,
\end{equation}where $\delta_z(f)=f(z)$ for a function $f$  is the evaluation operator.
Hence, for any continuous function $f$ defined on $\mathbb{B}^d$,
we have
$$\int_{\mathbb{B}^d}f(x)d\lambda_n(x)=\sum\limits_{\omega\in
\Omega_n}\mu_{\omega}f(\omega).$$ We call the induced measure
$\lambda_n$ satisfies the regularity condition with a constant $N$
if the inequality $$\int_{{\bf B}(y,\frac1n)}d\lambda_n(x)\le N
\int_{{\bf B}(y,\frac1n)}w_\mu(x)dx$$holds.  That is,
\begin{equation}\label{2.8} \sum _{\omega \in
\Omega_n\cap {\bf B}(y,\frac1n)}\mu_{\omega }
 \le N w_\mu({\bf B}(y,\frac1n)),\ {\rm for\ any}\ y\in\Bbb B^d,\end{equation}holds, where $w_\mu(E):=\int_{E} w_\mu(x)dx$ for any measurable $E\subset\Bbb
 B^d$.

\begin{lem}\label{lem2.1}(\cite[Theorem 3.1]{WHLW})
Suppose that $n\in\Bbb N$, $\Omega_n$ is a finite subset of
$\mathbb{B}^d$, and $\Gamma_n:=\{\mu _{\omega
}:\,\omega\in\Omega_n\}$ is a set of positive numbers. If there
exist  $p_{0}\in (0,\infty )$ and $M>0$ such that for any $f\in
\Pi _{n}^{d}$,
\begin{equation}\label{2.9}
  \sum _{\omega \in \Omega_n}\mu_{\omega }| f(\omega )|^{p_{0}}\le
  M
\int_{\mathbb{B}^d}| f(x)|^{p_{0}}w_{\mu }(x)dx,
\end{equation}
then  the following regularity condition
\begin{equation}\label{2.10} \sum _{\omega \in
\Omega_n\cap {\bf B}(y,1/n)}\mu_{\omega }
 \le CM w_\mu({\bf B}(y,1/n)),\ {\rm for\ any}\ y\in\BB,\end{equation} holds,  where $C>0$ depends only on $d,\,\mu$, and  $p_0$.
\end{lem}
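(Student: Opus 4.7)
The plan is to test the hypothesis \eqref{2.9} against a single polynomial bump $P_y\in\Pi_n^d$ localized at $y$, and read \eqref{2.10} off directly. Precisely, for each fixed $y\in\BB$ I look for $P_y$ satisfying (a) the pointwise lower bound $P_y(x)\ge c_1$ for every $x\in\mathbf{B}(y,1/n)$, and (b) the integral upper bound $\int_{\BB}|P_y(x)|^{p_0}w_\mu(x)\,dx\le c_2\, w_\mu(\mathbf{B}(y,1/n))$, with $c_1,c_2>0$ depending only on $d,\mu,p_0$. Given such a $P_y$, substituting $f=P_y$ into \eqref{2.9} immediately yields
\[
c_1^{p_0}\!\!\!\sum_{\omega\in\Omega_n\cap\mathbf{B}(y,1/n)}\!\!\!\mu_\omega\;\le\;\sum_{\omega\in\Omega_n}\mu_\omega|P_y(\omega)|^{p_0}\;\le\;M\!\int_{\BB}\!|P_y(x)|^{p_0}w_\mu(x)\,dx\;\le\;c_2 M\, w_\mu(\mathbf{B}(y,1/n)),
\]
which is exactly \eqref{2.10} with $C=c_2/c_1^{p_0}$.

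For the construction of $P_y$ I would take a normalized squared filtered kernel, namely $P_y(x)=K_{m,\eta}(x,y)^2/K_{m,\eta}(y,y)^2$, with $m$ chosen as a sufficiently small multiple of $n$ so that $P_y\in\Pi_{4m-2}^d\subseteq\Pi_n^d$ and $m\asymp n$. Property (b) is then routine: \eqref{2.5} bounds $|K_{m,\eta}(x,y)|$ by $g_m(x,y)$, and integrating via the standard estimate of \cite[Lemma 4.6]{PX}, together with the volumetric identity $w_\mu(\mathbf{B}(y,1/m))\asymp m^{-d}\mathcal{W}_\mu(m;y)$, reproduces exactly a factor of $w_\mu(\mathbf{B}(y,1/m))\asymp w_\mu(\mathbf{B}(y,1/n))$ on the right (replacing $K_{m,\eta}^2$ by a higher even power $K_{m,\eta}^{2k}$ if necessary, in order to absorb the small-$p_0$ integrability issue while keeping the degree $\le n$). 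Property (a) is established by combining the diagonal lower bound $K_{m,\eta}(y,y)\ge\sum_{k=0}^{m}P_k(w_\mu;y,y)\gtrsim m^d/\mathcal{W}_\mu(m;y)$ (valid because $\eta\ge\chi_{[0,1]}$ and because the Christoffel function of $\Pi_m^d$ admits the matching lower bound) with a quantitative Bernstein-type continuity estimate for $K_{m,\eta}(\cdot,y)\in\Pi_{2m-1}^d$, which---once $m$ is a sufficiently small multiple of $n$---gives $K_{m,\eta}(x,y)\ge\tfrac12 K_{m,\eta}(y,y)$ throughout $\mathbf{B}(y,1/n)$, whence $P_y(x)\ge 1/4$ there and (a) holds with $c_1=1/4$.

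The main obstacle is precisely this pointwise lower bound: the off-diagonal decay estimate \eqref{2.5} is one-sided and cannot by itself control the kernel from below near the diagonal. The required input is a Bernstein-type oscillation estimate for polynomials of degree $\le 2m-1$ on $\BB$, applied to $K_{m,\eta}(\cdot,y)$, bounding $|K_{m,\eta}(x,y)-K_{m,\eta}(y,y)|$ on $\mathbf{B}(y,1/n)$ by a fraction of the diagonal value $K_{m,\eta}(y,y)$. Once this continuity estimate is in place, the remaining steps---the $g_m^{2p_0}$-integral bound, the doubling/volumetric identities for $w_\mu$-balls, and the substitution into \eqref{2.9}---combine mechanically to produce the desired constant $C$ depending only on $d,\mu,p_0$.
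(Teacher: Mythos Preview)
The paper does not actually prove this lemma; it merely cites it as \cite[Theorem~3.1]{WHLW}. So there is no in-paper proof to compare against, and your task was effectively to reconstruct the argument from the external reference.

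Your approach---testing \eqref{2.9} against a polynomial bump built from a normalized even power of the filtered kernel $K_{m,\eta}(\cdot,y)$---is the standard and correct strategy for this type of statement, and is in the spirit of what \cite{WHLW} does. The reduction to properties~(a) and~(b) and the final substitution are exactly right, and you correctly isolate the one nontrivial ingredient: the pointwise lower bound $K_{m,\eta}(x,y)\ge \tfrac12 K_{m,\eta}(y,y)$ on $\mathbf{B}(y,1/n)$. Your suggested route via a Bernstein/Markov continuity estimate for $K_{m,\eta}(\cdot,y)\in\Pi_{2m-1}^d$, with $m=\lfloor cn\rfloor$ for $c$ small, works; an alternative (and sometimes cleaner) route is to use directly a pointwise gradient bound for the localized kernel of the form $|K_{m,\eta}(x,y)-K_{m,\eta}(y,y)|\lesssim m\,d(x,y)\cdot m^d/\mathcal{W}_\mu(m;y)$, which follows from the same machinery that produces \eqref{2.5}. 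Your remark about raising to a higher even power $K_{m,\eta}^{2k}$ to handle small $p_0$ is also on point: one needs $2k p_0(d+\mu+1)>d+2\mu$ so that $\int_{\BB}g_m(x,y)^{2kp_0}w_\mu(x)\,dx$ localizes to $\asymp w_\mu(\mathbf{B}(y,1/m))\cdot\big(m^d/\mathcal{W}_\mu(m;y)\big)^{2kp_0}$, and one simultaneously needs $4km-2\le n$, both of which are achievable with $k$ and $c$ depending only on $d,\mu,p_0$.
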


\begin{lem}\label{lem2.2}
Suppose that $n\in\Bbb N$, $\Omega_n$ is a finite subset of
$\mathbb{B}^d$, and $\Gamma_n:=\{\mu _{\omega
}\}_{\omega\in\Omega_n}$ is a set of positive numbers. If the
induced measures $\lambda_n$ by $\Gamma_n$ satisfies the
regularity condition \eqref{2.8}  with a constant $N$, then for any $1\le
p<\infty$, $m\in\Bbb N$, $m\geq n$, and $f\in \Pi _{m}^{d}$, we
have
\begin{equation}\label{2.11}\sum_{\omega \in \Omega_n}\mu_{\omega }|f(\omega )|^{p}\le
CN(\frac{m}{n})^{d+2\mu}\int_{\mathbb{B}^d}|f(x)|^{p}w_{\mu}(x)dx,\end{equation}where
$C>0$ depends only on $d,\,\mu$, and  $p$.
\end{lem}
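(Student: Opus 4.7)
The plan is to linearize the discrete weighted $\ell_p$ sum into an integral against $|f|^{p}w_\mu$ by a local polynomial mean--value inequality, and then to use the regularity condition \eqref{2.8} together with the weight comparison \eqref{2.1} to bound the resulting multiplier pointwise by $CN(m/n)^{d+2\mu}$.

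The starting point is the classical local Nikolskii inequality for Jacobi weights on the ball, a standard consequence of the Christoffel function estimate $\lambda_m(\omega)\asymp w_\mu({\bf B}(\omega,1/m))$ in \cite{PX}: for every $f\in\Pi_m^d$, $\omega\in\BB$, and $1\le p<\infty$,
\begin{equation*}
|f(\omega)|^{p}\;\lesssim\;\frac{1}{w_\mu({\bf B}(\omega,1/m))}\int_{{\bf B}(\omega,1/m)}|f(x)|^{p}w_\mu(x)\,dx,
\end{equation*}
with a constant depending only on $d,\mu,p$. Multiplying by $\mu_\omega$, summing over $\omega\in\Omega_n$, and exchanging the order of summation and integration yields
\begin{equation*}
\sum_{\omega\in\Omega_n}\mu_\omega |f(\omega)|^{p}\;\lesssim\;\int_{\BB}|f(x)|^{p}\,\Phi(x)\,w_\mu(x)\,dx,\qquad \Phi(x):=\sum_{\omega\in\Omega_n}\frac{\mu_\omega\chi_{{\bf B}(\omega,1/m)}(x)}{w_\mu({\bf B}(\omega,1/m))},
\end{equation*}
so the proof reduces to establishing the uniform pointwise bound $\Phi(x)\lesssim N(m/n)^{d+2\mu}$.

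To prove this bound, fix $x\in\BB$. Only indices $\omega$ with $d(\omega,x)\le 1/m$ contribute. For each such $\omega$, the comparison \eqref{2.1} gives $\mathcal{W}_\mu(m;\omega)\asymp \mathcal{W}_\mu(m;x)$, hence by the standard volume estimate $w_\mu({\bf B}(\omega,1/m))\asymp m^{-d}\mathcal{W}_\mu(m;x)$. Since $m\ge n$, the inclusion ${\bf B}(x,1/m)\subset{\bf B}(x,1/n)$ combined with the regularity condition \eqref{2.8} applied with $y=x$ gives
\begin{equation*}
\sum_{\omega\in\Omega_n\cap{\bf B}(x,1/m)}\mu_\omega\;\le\;N\,w_\mu({\bf B}(x,1/n))\;\asymp\; N\,n^{-d}\mathcal{W}_\mu(n;x).
\end{equation*}
Combining the last two displays,
\begin{equation*}
\Phi(x)\;\lesssim\;\frac{m^d}{\mathcal{W}_\mu(m;x)}\cdot N n^{-d}\mathcal{W}_\mu(n;x)\;=\;N\Bigl(\frac{m}{n}\Bigr)^{d}\frac{\mathcal{W}_\mu(n;x)}{\mathcal{W}_\mu(m;x)},
\end{equation*}
and the elementary estimate $\mathcal{W}_\mu(n;x)/\mathcal{W}_\mu(m;x)\le 2^{2\mu}(m/n)^{2\mu}$, valid for $m\ge n$ directly from $\mathcal{W}_\mu(k;x)=(\sqrt{1-|x|^2}+1/k)^{2\mu}$, supplies the remaining factor $(m/n)^{2\mu}$.

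The main obstacle is the first step: the local polynomial mean--value inequality is not recorded in the excerpt, but it should be cited from \cite{PX} rather than reproved. Once it is available, every other step is a routine manipulation using only \eqref{2.1}, \eqref{2.8}, and the volume estimate $w_\mu({\bf B}(x,1/k))\asymp k^{-d}\mathcal{W}_\mu(k;x)$, and the constants accumulate into a single $C$ depending only on $d,\mu,p$.
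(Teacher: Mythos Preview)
Your argument is correct and reaches the sharp exponent $d+2\mu$, but it follows a genuinely different path from the paper. The paper first establishes the kernel--sum bound of Lemma~\ref{lem2.3},
\[
\max_{y\in\BB}\sum_{\omega\in\Omega_n}\mu_\omega|K_{m,\eta}(\omega,y)|\lesssim N(m/n)^{d+2\mu},
\]
via a maximal $\tfrac1n$--separated covering and the pointwise comparison \eqref{2.14}, and only then derives Lemma~\ref{lem2.2} by writing $|f(\omega)|^p\lesssim\int_{\BB}|f(y)|^p|K_{m,\eta}(\omega,y)|w_\mu(y)\,dy$ via H\"older and the global reproducing formula \eqref{2.4}, swapping sum and integral, and invoking Lemma~\ref{lem2.3}. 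Your route instead localizes at the very first step through the sub--mean--value inequality $|f(\omega)|^p\lesssim w_\mu({\bf B}(\omega,1/m))^{-1}\int_{{\bf B}(\omega,1/m)}|f|^p w_\mu$, and then controls the resulting multiplier $\Phi(x)$ by a single application of \eqref{2.8} together with the elementary ratio bound $\mathcal W_\mu(n;x)/\mathcal W_\mu(m;x)\le (m/n)^{2\mu}$. This bypasses both Lemma~\ref{lem2.3} and the maximal--separated--set machinery, at the cost of importing the local Nikolskii inequality as a black box from \cite{PX}. In effect the paper keeps all analytic work inside the kernel estimate \eqref{2.5} and is fully self--contained once \eqref{2.5} is granted, while your version is shorter but shifts some weight onto an external (though standard) citation; both yield the same constant structure $CN(m/n)^{d+2\mu}$ with $C=C(d,\mu,p)$.
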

 Lemma \ref{lem2.2} is an improvement of \cite[Corollary 3.3]{WHLW} with the exponent $d+2\mu+1$  in \cite[Corollary 3.3]{WHLW} replaced by $d+2\mu$ in \eqref{2.11}.
   Its proof is based on the following lemma.

\begin{lem}\label{lem2.3} Suppose that $n\in\Bbb N$, $\Omega_n$ is a finite subset
of $\mathbb{B}^d$, and $\Gamma_n:=\{\mu _{\omega
}\}_{\omega\in\Omega_n}$ is a set of positive numbers. If the
induced measure $\lambda_n$ by $\Gamma_n$ satisfies the regularity
condition \eqref{2.8} with a constant $N$, then for any $m\in\Bbb
N$, $m\geq n$, we have
\begin{equation}\label{2.12}\max\limits_{y\in\mathbb{B}^d}\Big(\sum _{\omega \in \Omega_n}\mu_{\omega }|K_{m,\eta}(\omega,y )|\Big)\le
CN(\frac{m}{n})^{d+2\mu},\end{equation}where $C>0$ is independent
of $m, n$ and $N$.
\end{lem}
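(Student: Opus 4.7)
The plan is to invoke the pointwise kernel estimate \eqref{2.5} to replace $|K_{m,\eta}(\omega,y)|$ by the majorant $g_m(\omega,y)$, then to separate the $m$- and $n$-dependence by comparing $g_m$ with $g_n$, and finally to reduce the resulting discrete sum against $g_n$ to the continuous integral $\int_{\BB}g_n(x,y)w_\mu(x)dx$ (which is $O(1)$ by \eqref{2.5-0}) using the regularity condition \eqref{2.8} on a bounded-overlap covering of $\BB$ by balls of radius $1/n$.

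For the first reduction, set $a(x):=\sqrt{1-|x|^2}$. Since $m\ge n$, the elementary inequality $a(x)+1/n\le (m/n)(a(x)+1/m)$ gives the sharp pointwise ratio $\mathcal{W}_\mu(n;x)\le (m/n)^{2\mu}\mathcal{W}_\mu(m;x)$, while trivially $(1+md(\omega,y))^{-(d+\mu+1)}\le (1+nd(\omega,y))^{-(d+\mu+1)}$. Combining these with $m^d=(m/n)^dn^d$ yields $g_m(\omega,y)\le (m/n)^{d+2\mu}g_n(\omega,y)$, so it suffices to prove $\sum_{\omega\in\Omega_n}\mu_\omega g_n(\omega,y)\lesssim N$ uniformly in $y\in\BB$.

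For the second reduction, choose a maximal $1/n$-separated set $\Lambda\subset\BB$, so that by \eqref{2.2} the balls $\{{\bf B}(\xi,1/n)\}_{\xi\in\Lambda}$ cover $\BB$ with overlap bounded by $C_d$. For $\omega\in{\bf B}(\xi,1/n)$ one has $nd(\omega,\xi)\le 1$, so \eqref{2.1} gives $\mathcal{W}_\mu(n;\omega)\asymp\mathcal{W}_\mu(n;\xi)$, and $|d(\omega,y)-d(\xi,y)|\le 1/n$ gives $1+nd(\omega,y)\asymp 1+nd(\xi,y)$; hence $g_n(\omega,y)\asymp g_n(\xi,y)$ on ${\bf B}(\xi,1/n)$. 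Applying the covering, the regularity condition \eqref{2.8}, and the standard Jacobi-weight estimate $w_\mu({\bf B}(\xi,1/n))\asymp n^{-d}\mathcal{W}_\mu(n;\xi)$ yields
\begin{equation*}
\sum_{\omega\in\Omega_n}\mu_\omega g_n(\omega,y)\le \sum_{\xi\in\Lambda}\sum_{\omega\in\Omega_n\cap{\bf B}(\xi,1/n)}\mu_\omega g_n(\omega,y)\lesssim N\sum_{\xi\in\Lambda}g_n(\xi,y)w_\mu({\bf B}(\xi,1/n)).
\end{equation*}
Since $g_n(\cdot,y)\asymp g_n(\xi,y)$ on each ${\bf B}(\xi,1/n)$, the last sum is comparable to $\sum_{\xi\in\Lambda}\int_{{\bf B}(\xi,1/n)}g_n(x,y)w_\mu(x)dx$, which by the bounded overlap \eqref{2.2} is at most $C_d\int_{\BB}g_n(x,y)w_\mu(x)dx$, and this is $\lesssim 1$ by \eqref{2.5-0}.

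The main obstacle is to obtain the exponent $d+2\mu$ (rather than $d+2\mu+1$ as in \cite[Corollary 3.3]{WHLW}); this hinges on the sharp pointwise quotient $\mathcal{W}_\mu(n;x)/\mathcal{W}_\mu(m;x)\le (m/n)^{2\mu}$ used in the first reduction, rather than the weaker bound one would obtain by invoking only \eqref{2.1}. The remaining covering-and-integration argument is the standard continuous-to-discrete comparison in the Jacobi-weight geometry of the ball, where one must verify that both the weight factor $\mathcal{W}_\mu(n;\cdot)$ and the metric factor $(1+nd(\cdot,y))^{d+\mu+1}$ are jointly stable on each $1/n$-ball.
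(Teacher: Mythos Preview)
Your proof is correct and follows essentially the same route as the paper: the same sharp pointwise comparison $g_m\le (m/n)^{d+2\mu}g_n$ via $\mathcal{W}_\mu(n;x)/\mathcal{W}_\mu(m;x)\le(m/n)^{2\mu}$, the same maximal $1/n$-separated covering with bounded overlap \eqref{2.2}, the same local comparability of $g_n(\cdot,y)$ on each $1/n$-ball, and the same passage back to the continuous integral bounded by \eqref{2.5-0}. Your explicit verification that the metric factor $(1+nd(\cdot,y))^{d+\mu+1}$ is stable on $1/n$-balls via the triangle inequality is a welcome clarification; the paper leaves this implicit when it invokes \eqref{2.1} for the comparability \eqref{2.14}.
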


\begin{proof}
For $x,y\in\mathbb{B}^d$, and $m\ge n$,  set
\begin{equation*}
  g_m(x,y)=\frac{m^d}{\sqrt{\mathcal{W}_\mu(m;x)}\sqrt{\mathcal{W}_\mu(m;y)}(1+md(x,y))^{d+\mu+1}}.
\end{equation*}
Note that
$$\frac{\mathcal{W}_\mu(n;x)}{\mathcal{W}_\mu(m;x)}=\Big(\frac{1/n+\sqrt{1-|x|^2}}{1/m+\sqrt{1-|x|^2}}\Big)^{2\mu} =\Big(\frac{1+n\sqrt{1-|x|^2}}{1+m\sqrt{1-|x|^2}}\Big)^{2\mu}(\frac{m}{n})^{2\mu}\le (\frac{m}{n})^{2\mu}.$$
It follows that
\begin{align*}
 g_m(x,y)&\le(\frac{m}{n})^{2\mu}\frac{m^d}{\sqrt{\mathcal{W}_\mu(n;x)}\sqrt{\mathcal{W}_\mu(n;y)}(1+md(x,y))^{d+\mu+1}}\nonumber
 \\&\le(\frac{m}{n})^{d+2\mu}\frac{n^d}{\sqrt{\mathcal{W}_\mu(n;x)}\sqrt{\mathcal{W}_\mu(n;y)}(1+nd(x,y))^{d+\mu+1}}\nonumber\\&=(\frac{m}{n})^{d+2\mu}g_n(x,y).
\end{align*}
According to \eqref{2.5}, we have
\begin{equation}\label{2.13}|K_{m,\eta}(x,y)|\lesssim g_m(x,y)\le
(\frac{m}{n})^{d+2\mu}g_n(x,y).\end{equation}

Let $\Lambda_n$ be a maximal $\frac{1}n$-separated set on $\BB$.
For $\xi\in \Lz_n$ and $x,x^\prime\in{\bf B}(\xi,\frac 1n)$, by
\eqref{2.1} we have
$$g_n(x',y)\asymp g_n(\xi,y)\asymp g_n(x,y),$$which leads to
\begin{equation}\label{2.14}
  \max\limits_{x\in{\bf B}(\xi,\frac 1n)}g_n(x,y)\lesssim \min\limits_{x\in{\bf B}(\xi,\frac 1n)}g_n(x,y).
\end{equation}
It follows from \eqref{2.13}  that for any $y\in\mathbb{B}^d$,
\begin{align*}
  \sum_{\omega\in\Omega_n}\mu_{\omega}|K_{m,\eta}(\omega,y)|&=\int_{\mathbb{B}^d}|K_{m,\eta}(x,y)|d\lambda_n(x)
\\&\lesssim (\frac{m}{n})^{d+2\mu}\int_{\mathbb{B}^d}g_n(x,y)d\lambda_n(x)
\\&\le (\frac{m}{n})^{d+2\mu}\int_{\mathbb{B}^d}g_n(x,y)\Big(\sum\limits_{\xi\in\Lambda_n}\chi_{{\bf B}(\xi,\frac 1 n)}(x)\Big)d\lambda_n(x)
\\&\le (\frac{m}{n})^{d+2\mu}\sum\limits_{\xi\in\Lambda_n}\max\limits_{x\in{\bf B}(\xi,\frac 1n)}g_n(x,y)\int_{{\bf B}(\xi,\frac 1n)}d\lambda_n(x)
\\&\lesssim N(\frac{m}{n})^{d+2\mu}\sum\limits_{\xi\in\Lambda_n}\min\limits_{x\in{\bf B}(\xi,\frac 1n)}g_n(x,y)\int_{{\bf B}(\xi,\frac 1n)}w_\mu(x)dx
\\&\le N(\frac{m}{n})^{d+2\mu}\sum\limits_{\xi\in\Lambda_n}\int_{\mathbb{B}^d}g_n(x,y)\chi_{{\bf B}(\xi,\frac 1 n)}(x)w_\mu(x)dx
\\&= N(\frac{m}{n})^{d+2\mu}\int_{\mathbb{B}^d}g_n(x,y)\sum\limits_{\xi\in\Lambda_n}\chi_{{\bf B}(\xi,\frac 1n)}(x)w_\mu(x)dx
\\&\lesssim N(\frac{m}{n})^{d+2\mu}\int_{\mathbb{B}^d}g_n(x,y)w_\mu(x)dx\lesssim N(\frac{m}{n})^{d+2\mu},
\end{align*}
where  in the second and the second  last inequalities we used
\eqref{2.2}; in the fourth inequality we used the regularity
condition and \eqref{2.14}; and in the last inequality we used
\eqref{2.5-0}. This completes the proof of Lemma \ref{lem2.3}.
\end{proof}

Now we turn to prove Lemma \ref{lem2.2}.

\

\noindent{\it Proof of Lemma \ref{lem2.2}}

The proof is standard (see \cite[Corollary 3.3]{WHLW}). For the
convenience of the readers we give the proof.

Applying \eqref{2.7} and the H\"older inequality, we have for
$f\in\Pi_m^d$ with $m\ge n$, $1\le p<\infty$,  and
$x\in\mathbb{B}^d$,
\begin{align}\label{2.17}
  |f(x)| & =\Big|\int_{\mathbb{B}^d}f(y)K_{m,\eta}(x,y)w_\mu(y)dy\Big|\nonumber
  \\&\le\int_{\mathbb{B}^d}|f(y)||K_{m,\eta}(x,y)|^{\frac1p}|K_{m,\eta}(x,y)|^{\frac{p-1}p}w_\mu(y)dy\nonumber
  \\&\le\Big(\int_{\mathbb{B}^d}|f(y)|^p|K_{m,\eta}(x,y)|w_\mu(y)dy\Big)^{\frac1p}\Big(\int_{\mathbb{B}^d}|K_{m,\eta}(x,y)|w_\mu(y)dy\Big)^{\frac{p-1}p}\nonumber
  \\&\lesssim \Big(\int_{\mathbb{B}^d}|f(y)|^p|K_{m,\eta}(x,y)|w_\mu(y)dy\Big)^{\frac1p},
\end{align}
where in the last inequality we used \eqref{2.5-0}. It follows
from \eqref{2.17} and Lemma \ref{lem2.3} that
\begin{align*}
  \sum _{\omega \in \Omega_n}\mu_{\omega }| f(\omega )|^{p}
  &\lesssim \sum _{\omega \in \Omega_n}\mu _{\omega }\int_{\mathbb{B}^d}|f(y)|^p|K_{m,\eta}(\omega,y)|w_\mu(y)dy
  \\&=\int_{\mathbb{B}^d}|f(y)|^p\Big(\sum _{\omega \in \Omega_n}\mu_{\omega }|K_{m,\eta}(\omega,y)|\Big)w_\mu(y)dy
 \\& \le \int_{\mathbb{B}^d}|f(y)|^pw_\mu(y)dy\,\max\limits_{y\in\mathbb{B}^d}\Big(\sum _{\omega\in \Omega_n}\mu _{\omega }|K_{m,\eta}(\omega,y)|\Big)
 \\& \lesssim N(\frac{m}{n})^{d+2\mu}\int _{\mathbb{B}^d}| f(x)|^{p}w_{\mu }(x)dx.
\end{align*}
Lemma \ref{lem2.2} is proved. $\hfill\Box$

\

Finally we give the Nikolskii inequalities on $\BB$.

\begin{lem}(\cite[Proposition 2.4]{KPX}) Let $ 1\le p,q\le \infty$ and $\mu\ge 0$. Then for  any $P\in \Pi_n^d$ we
have,
\begin{equation}\label{2.19}\|P\|_{q,\mu}\lesssim n^{(d+2\mu)(1/p-1/q)_+}\|P\|_{p,\mu}.\end{equation}\end{lem}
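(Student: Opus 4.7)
My plan is to reduce the general Nikolskii inequality to the $L_\infty$ endpoint $\|P\|_\infty \lesssim n^{(d+2\mu)/p}\|P\|_{p,\mu}$ and then establish the endpoint via the reproducing kernel $K_{n,\eta}$ of the filtered approximation.

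First, I would split into cases. When $q \le p$ we have $(1/p-1/q)_+=0$; since $w_\mu(x)dx$ is a probability measure on $\BB$, H\"older's inequality immediately gives $\|P\|_{q,\mu} \le \|P\|_{p,\mu}$, so the claim holds with constant $1$. I therefore assume $p<q$. The case $q=\infty$ is the endpoint itself, while for $p<q<\infty$ the elementary pointwise bound $|P(x)|^q \le \|P\|_\infty^{q-p}|P(x)|^p$ yields, upon integration against $w_\mu$,
\begin{equation*}
\|P\|_{q,\mu} \le \|P\|_\infty^{1-p/q}\,\|P\|_{p,\mu}^{p/q}.
\end{equation*}
Substituting the endpoint bound into the right-hand side produces exactly the exponent $(d+2\mu)(1/p-1/q)$, so it suffices to treat $q=\infty$.

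For the endpoint, I would invoke the reproducing property \eqref{2.4}: for $P \in \Pi_n^d$,
\begin{equation*}
P(x) = \int_{\BB}P(y)\,K_{n,\eta}(x,y)\,w_\mu(y)\,dy,
\end{equation*}
and H\"older's inequality with conjugate exponents $p,p'$ gives $|P(x)| \le \|P\|_{p,\mu}\,\|K_{n,\eta}(x,\cdot)\|_{p',\mu}$. The task then reduces to proving the uniform kernel estimate
\begin{equation*}
\sup_{x\in\BB}\|K_{n,\eta}(x,\cdot)\|_{p',\mu} \lesssim n^{(d+2\mu)/p},
\end{equation*}
which, via the pointwise bound \eqref{2.5}, is equivalent to controlling $\int_{\BB}g_n(x,y)^{p'}w_\mu(y)\,dy$ uniformly in $x$. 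I would write $g_n^{p'}=g_n\cdot g_n^{p'-1}$, bound $g_n(x,y)^{p'-1}$ by its essentially pointwise maximum on each ball $\mathbf{B}(\xi,1/n)$ using the weight equivalence \eqref{2.1}, and then close the argument with the $L^1$-bound \eqref{2.5-0}.

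The main obstacle is extracting the \emph{sharp} power of $n$: both the Jacobi weight $w_\mu$ (which degenerates near $|y|=1$) and the non-uniform $L^\infty$-behaviour of $K_{n,\eta}(x,\cdot)$ in $x$ create anisotropies that a naive estimate would blur, producing a suboptimal exponent. The natural remedy, exactly parallel to the proof of \lemref{lem2.3}, is to decompose $\BB$ by a maximal $1/n$-separated set $\Lambda_n$ via \eqref{2.2} and then reduce the integral to a sum of local contributions, each controlled by the pointwise maximum of $g_n$ on the corresponding ball $\mathbf{B}(\xi,1/n)$. Summing these local contributions against $w_\mu$ and reapplying \eqref{2.5-0} should deliver the sharp exponent $(d+2\mu)/p$, completing the endpoint estimate and hence the lemma.
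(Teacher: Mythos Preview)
The paper does not prove this lemma at all: it is simply quoted from \cite[Proposition 2.4]{KPX} and used as a black box. So there is no ``paper's own proof'' to compare against; your proposal supplies an argument where the paper gives none.

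Your outline is correct and follows the standard route to Nikolskii inequalities in this setting. One simplification: the maximal $1/n$-separated decomposition you invoke at the end is unnecessary. Once you write $g_n^{p'}=g_n^{p'-1}\cdot g_n$, you can bound the first factor by a \emph{global} supremum rather than local maxima. Using \eqref{2.1} to transfer the $y$-weight to $x$,
\[
g_n(x,y)\lesssim \frac{n^d}{\mathcal{W}_\mu(n;x)\,(1+nd(x,y))^{d+1}}\le \frac{n^d}{\mathcal{W}_\mu(n;x)},
\]
so $\|g_n(x,\cdot)\|_\infty\lesssim n^d/\mathcal{W}_\mu(n;x)$. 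Combining this with the $L^1$-bound \eqref{2.5-0} gives
\[
\|K_{n,\eta}(x,\cdot)\|_{p',\mu}\lesssim\Big(\frac{n^d}{\mathcal{W}_\mu(n;x)}\Big)^{1/p},
\]
and since $\mathcal{W}_\mu(n;x)\ge n^{-2\mu}$, the supremum over $x$ is $n^{(d+2\mu)/p}$, exactly the endpoint you need. The rest of your reduction (H\"older for $q\le p$, the interpolation $\|P\|_{q,\mu}\le\|P\|_\infty^{1-p/q}\|P\|_{p,\mu}^{p/q}$ for $p<q<\infty$) is standard and correct.
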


\

\section{Proof of Theorem \ref{thm1.6} }

In this section, we give the following lemma from which Theorem
\ref{thm1.6} follows immediately.

\begin{lem}\label{lem3.1}Let $1\le p\le\infty$, $1\le t,q<\infty$, and $\mu\ge 0$. Suppose that $(\mathcal{X},\tau)$ is an $L_{t,\mu}$-MZ family
with global condition number $\kappa=B/A$, $L_{n,t}$ is the
weighted least $\ell_t$ approximation  defined by \eqref{1.2}. If
$f\in W_{p,\mu}^r$, $r>(d+2\mu)\max\{1/p,1/t\}$, then we have
\begin{equation}\label{3.1}
  \|f-L_{n,t}(f)\|_{q,\mu}\le C(1+\kappa^{1/t})n^{-r+(d+2\mu)[(\frac1p-\frac1t)_++(\frac1t-\frac1q)_+]}\|f\|_{W_{p,\mu}^r},
\end{equation}where $C>0$  is independent of  $f$, $n$, $\kappa$, and $(\mathcal{X},\tau)$.
\end{lem}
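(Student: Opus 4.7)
The plan is to use the filtered polynomial $V_m(f)$, where $m=\lfloor n/2\rfloor$ so that $V_m(f)\in\Pi_{2m-1}^d\subset\Pi_n^d$ and $m\asymp n$, as a pivot and split
\begin{equation*}
\|f-L_{n,t}(f)\|_{q,\mu}\le\|f-V_m(f)\|_{q,\mu}+\|V_m(f)-L_{n,t}(f)\|_{q,\mu}.
\end{equation*}
The second term involves a polynomial in $\Pi_n^d$, so the $L_{t,\mu}$-MZ lower bound in \eqref{1.1}, combined with the minimality of $L_{n,t}(f)$ in the discrete $\|\cdot\|_{(t)}$-norm and the triangle inequality, yields
\begin{equation*}
A^{1/t}\|V_m(f)-L_{n,t}(f)\|_{t,\mu}\le\|V_m(f)-L_{n,t}(f)\|_{(t)}\le 2\|V_m(f)-f\|_{(t)},
\end{equation*}
and Nikolskii's inequality \eqref{2.19} converts to $\|\cdot\|_{q,\mu}$ at a cost of $n^{(d+2\mu)(1/t-1/q)_+}$.

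The heart of the argument is controlling the discrete norm $\|V_m(f)-f\|_{(t)}$. I would use the dyadic telescoping $f-V_m(f)=\sum_{j\ge 1}(V_{2^jm}(f)-V_{2^{j-1}m}(f))$, each summand lying in $\Pi_{2^{j+1}m}^d$. The $L_{t,\mu}$-MZ upper bound combined with \lemref{lem2.1} (applied with $p_0=t$) shows the induced measure $\mu_n$ satisfies the regularity condition with constant $\asymp B$; then \lemref{lem2.2} gives, for every $P\in\Pi_M^d$ with $M\ge n$,
\begin{equation*}
\|P\|_{(t)}\lesssim B^{1/t}(M/n)^{(d+2\mu)/t}\|P\|_{t,\mu}.
\end{equation*}
Applying this to each dyadic piece (with $M\asymp 2^{j}n$), then Nikolskii \eqref{2.19} to pass from $\|\cdot\|_{t,\mu}$ to $\|\cdot\|_{p,\mu}$, and finally \eqref{2.6} on $\|V_k(f)-f\|_{p,\mu}\lesssim k^{-r}\|f\|_{W^r_{p,\mu}}$, each term of the sum is bounded by a constant times
\begin{equation*}
B^{1/t}\,2^{j[(d+2\mu)(1/t+(1/p-1/t)_+)-r]}\,n^{(d+2\mu)(1/p-1/t)_+-r}\,\|f\|_{W^r_{p,\mu}}.
\end{equation*}
The hypothesis $r>(d+2\mu)\max\{1/p,1/t\}=(d+2\mu)[1/t+(1/p-1/t)_+]$ is exactly what forces the geometric ratio below $1$, so the series converges and
\begin{equation*}
\|V_m(f)-f\|_{(t)}\lesssim B^{1/t}n^{(d+2\mu)(1/p-1/t)_+-r}\|f\|_{W^r_{p,\mu}}.
\end{equation*}

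For the remaining $\|f-V_m(f)\|_{q,\mu}$, the same dyadic decomposition in continuous norms, combined with Nikolskii from $L_{p,\mu}$ to $L_{q,\mu}$, yields $\|f-V_m(f)\|_{q,\mu}\lesssim n^{-r+(d+2\mu)(1/p-1/q)_+}\|f\|_{W^r_{p,\mu}}$, which is dominated by the polynomial contribution because $(1/p-1/q)_+\le(1/p-1/t)_++(1/t-1/q)_+$. Combining everything, with $A^{-1/t}B^{1/t}=\kappa^{1/t}$, yields the claimed bound with prefactor $1+\kappa^{1/t}$. The main obstacle is the bookkeeping in this dyadic sum: the three factors $(M/n)^{(d+2\mu)/t}$ from \lemref{lem2.2}, $(2^{j}n)^{(d+2\mu)(1/p-1/t)_+}$ from Nikolskii, and the gain $(2^{j}n)^{-r}$ from filtered approximation must conspire to give a geometric series that converges under precisely the stated hypothesis; in particular the sharper exponent $d+2\mu$ (rather than $d+2\mu+1$) in \lemref{lem2.2} is essential for the sharp threshold.
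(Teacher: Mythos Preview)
Your proposal is correct and follows essentially the same route as the paper's proof: a filtered polynomial pivot in $\Pi_n^d$, the MZ lower bound plus minimality to reduce to $\|f-V_m(f)\|_{(t)}$, a dyadic telescoping of $f-V_m(f)$, and then Lemmas~\ref{lem2.1}--\ref{lem2.2} followed by Nikolskii and \eqref{2.6} on each block, with the geometric series converging precisely when $r>(d+2\mu)\max\{1/p,1/t\}$. The only differences are cosmetic---the paper indexes its dyadic blocks by pure powers of two (taking $2^s\le n<2^{s+1}$ and pivoting at $V_{2^{s-1}}(f)$) rather than by multiples $2^jm$ of $m=\lfloor n/2\rfloor$---and it applies Nikolskii before the MZ lower bound rather than after, but the substance and the bookkeeping are the same.
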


\begin{proof}

For $n\in\Bbb N$, we choose a nonnegative integer $s$ such that
$$2^s\le n<2^{s+1},$$ and for $f\in W_{p,\mu }^r$,  we define
$$\sigma_1(f)=V_{1}(f), \ \ \sigma_j(f)=V_{2^{j-1}}(f)-V_{2^{j-2}}(f),\ \
{\rm for}\ j\ge2.$$ Note that $$\sigma_j(f)\in\Pi_{2^{j}}^d\ {\rm
and}\ V_{2^{s-1}}(f)=\sum\limits_{j=1}^s\sigma_j(f)\in
\Pi_{2^{s}}^d.$$ By \eqref{2.6} we get
\begin{align}\label{3.2}
  \|\sigma_j(f)\|_{p,\mu}&\le \|f-V_{2^{j-1}}(f)\|_{p,\mu}+\|f-V_{2^{j-2}}(f)\|_{p,\mu}\nonumber
  \\&\lesssim E_{2^{j-1}}(f)_{p,\mu}+E_{2^{j-2}}(f)_{p,\mu}\lesssim 2^{-jr}\|f\|_{W_{p,\mu}^r}.
\end{align}

For $f\in W_{p,\mu }^r$, $r>(d+2\mu)/p$, we have
\begin{align}\label{3.3}
  \|f-L_{n,t}(f)\|_{q,\mu}&\le
  \|f-V_{2^{s-1}}(f)\|_{q,\mu}+\|L_{n,t}(f)-V_{2^{s-1}}(f)\|_{q,\mu}.
\end{align}
First we estimate $\|f-V_{2^{s-1}}(f)\|_{q,\mu}$. Note that the
series $\sum\limits_{j=s+1}^\infty\sigma_j(f)$ converges to
$f-V_{2^{s-1}}(f)$ in $L_{q,\mu}$ norm. Thus, by the Nikolskii
inequality \eqref{2.19} and \eqref{3.2}, we obtain
\begin{align}\label{3.4}
  \|f-V_{2^{s-1}}(f)\|_{q,\mu}&\lesssim \sum\limits_{j=s+1}^\infty\| \sigma_j(f)\|_{q,\mu}
   \nonumber\\&\lesssim \sum\limits_{j=s+1}^\infty (2^j)^{(d+2\mu)(\frac1p-\frac1q)_+}\| \sigma_j(f)\|_{p,\mu}
   \nonumber\\&\lesssim \sum\limits_{j=s+1}^\infty (2^{j})^{-r+(d+2\mu)(\frac1p-\frac1q)_+}\|f\|_{W_{p,\mu}^r}
  \nonumber\\&\lesssim (2^{s+1})^{-r+(d+2\mu)(\frac1p-\frac1q)_+}\|f\|_{W_{p,\mu}^r}
   \nonumber\\&\le n^{-r+(d+2\mu)(\frac1p-\frac1q)_+}\|f\|_{W_{p,\mu}^r}.
\end{align}
Next we estimate $\|L_{n,t}(f)-V_{2^{s-1}}(f)\|_{q,\mu}$. We note
that $(L_{n,t}(f)-V_{2^{s-1}}(f))\in \Pi_n^d$. It follows from
\eqref{2.19}, \eqref{1.1}, and \eqref{1.3} that
\begin{align}
  \|L_{n,t}(f)-V_{2^{s-1}}(f)\|_{q,\mu}&\lesssim n^{(d+2\mu)(\frac1t-\frac1q)_+}\|L_{n,t}(f)-V_{2^{s-1}}(f)\|_{t,\mu}
  \notag\\&\le A^{-1/t}n^{(d+2\mu)(\frac1t-\frac1q)_+}\|L_{n,t}(f)-V_{2^{s-1}}(f)\|_{(t)}
  \notag\\&\le A^{-1/t}n^{(d+2\mu)(\frac1t-\frac1q)_+}\{\|L_{n,t}(f)-f\|_{(t)}+\|f-V_{2^{s-1}}(f)\|_{(t)}\}
  \notag\\&\le 2 A^{-1/t}n^{(d+2\mu)(\frac1t-\frac1q)_+}\|f-V_{2^{s-1}}(f)\|_{(t)}
  \notag\\&\lesssim A^{-1/t}n^{(d+2\mu)(\frac1t-\frac1q)_+}\sum\limits_{j=s+1}^\infty\|
  \sigma_j(f)\|_{(t)}.\label{3.5}
\end{align}
According to \eqref{1.1}, it is easy to see that \eqref{2.9} is
true for $\{\tau_{n,k}\}_{k=1}^{l_n}$ with $p_0=t$ and $N=B$. Note
that $\sigma_j(f)\in\Pi_{2^{j}}^d$, and $2^j\ge n$ for $j\ge s+1$.
It follows from Lemma \ref{lem2.2}
 that  for $j\ge s+1$,
\begin{align*}
  \| \sigma_j(f)\|_{(t)}^t&=\sum\limits_{k=1}^{l_n}|\sigma_j(x_{n,k})|^t\tau_{n,k}\\&\lesssim B\Big(\frac{2^{j}}n\Big)^{d+2\mu}\|\sigma_j(f)\|_{t,\mu}^t
  \\&\lesssim B\Big(\frac{2^{j}}n\Big)^{d+2\mu}(2^j)^{t(d+2\mu)(\frac1p-\frac1t)_+}\|\sigma_j(f)\|_{p,\mu}^t
  \\&\lesssim
  B\Big(\frac{2^{j}}n\Big)^{d+2\mu}(2^{j})^{-rt+t(d+2\mu)(\frac1p-\frac1t)_+}\|f\|_{W_{p,\mu}^r}^t
  \\&= Bn^{-d-2\mu}(2^{j})^{-t\{r-(d+2\mu)[(\frac1p-\frac1t)_++\frac1t]\}}\|f\|_{W_{p,\mu}^r}^t,
\end{align*}that is,
\begin{equation}\label{3.6}
  \| \sigma_j(f)\|_{(t)}\lesssim B^{1/t}n^{-(d+2\mu)/t}(2^{j})^{-r+(d+2\mu)[(\frac1p-\frac1t)_++\frac1t]}\|f\|_{W_{p,\mu}^r}.
\end{equation}
Hence, for $r>(d+2\mu)\max\{1/p,1/t\}$, by \eqref{3.5} and
\eqref{3.3}  we obtain
\begin{align}\label{3.7}
  &\quad\|L_{n,t}(f)-V_{2^{s-1}}(f)\|_{q,\mu}\lesssim A^{-1/t}n^{(d+2\mu)(\frac1t-\frac1q)_+}\sum\limits_{j=s+1}^\infty\| \sigma_j(f)\|_{(t)}\nonumber
  \\&\lesssim \kappa^{1/t}n^{(d+2\mu)(\frac1t-\frac1q)_+} n^{-(d+2\mu)/t}
  \sum\limits_{j=s+1}^\infty (2^{j})^{-r+(d+2\mu)[(\frac1p-\frac1t)_++\frac1t]}\|f\|_{W_{p,\mu}^r}\nonumber
  \\&\lesssim\kappa^{1/t}n^{-r+(d+2\mu)[(\frac1p-\frac1t)_++(\frac1t-\frac1q)_+]}\|f\|_{W_{p,\mu}^r}.
\end{align}
By \eqref{3.3}, \eqref{3.4}, and  \eqref{3.7}, we get
\eqref{3.1}, which completes the proof of Lemma \ref{3.1}.

\end{proof}

\begin{rem}Let $1\le p\le\infty$,  $\mu\ge 0$, and  $r>(d+2\mu)/p$. Suppose that $(\mathcal{X},\tau)$ is an $L_{\infty}$-MZ family
with global condition number $\kappa=1/A$, $L_{n,\infty}$ is the
weighted least $\ell_\infty$ approximation  defined by
\eqref{1.2}. For $f\in W_{p,\mu}^r$, by \eqref{3.4} we have $$
\|f-V_{2^{s-1}}(f)\|_{(\infty)}\le
\|f-V_{2^{s-1}}(f)\|_\infty\lesssim
n^{-r+(d+2\mu)/p}\|f\|_{W_{p,\mu}^r}.
$$It follows that
$$
  \|f-L_{n,\infty}(f)\|_{\infty}\le C(1+\kappa)n^{-r+(d+2\mu)/p}\|f\|_{W_{p,\mu}^r},
$$where $C>0$  is independent of  $f$, $n$, $\kappa$, and
$(\mathcal{X},\tau)$.\end{rem}

\section{Weighted least $\ell_q$ approximation on the sphere}

In this section, we  discuss the weighted least $\ell_q$
approximation problem on the unit sphere $\Bbb S^d$ in
$\mathbb{R}^{d+1}$. Let   $L_p(\Bbb S^d),\ 0< p< \infty,$ denote
 the space of all Lebesgue measurable functions $f$ on
$\Bbb S^d$ with the finite quasi-norm
$$\|f\|_{p}:=\Big(\int_{\Bbb S^d}|f(x)|^pd\sigma(x)\Big)^{1/p},$$ where $d\sigma(x)$ is the rotationally invariant
measure on $\Bbb S^d$ normalized by $\int_{\Bbb S^d}d\sigma(x)=1$.
When $p=\infty$ we consider the space of continuous functions
$C(\Bbb S^{d})$ with the uniform norm.
 In particular, $L_2(\Bbb
S^d)$ is a Hilbert space with inner product $$\langle
f,g\rangle:=\int_{\Bbb B^d}f(x)g(x)d\sigma(x),\ {\rm for}\ f,g\in
L_2(\Bbb S^d).$$

We denote  by $\mathcal{H}_n(\Bbb S^d)$  the space of all
spherical harmonics of degree $n$, i.e., the space of the
restrictions to $\Bbb S^d$ of all  homogeneous harmonic
polynomials of exact degree $n$ on $\mathbb{R}^{d+1}$, and by
$\Pi_n(\Bbb S^d)$ the space of all spherical polynomials of degree
not exceeding $n$.

It is well known that the spaces $\mathcal{H}_n(\Bbb S^d),\,
n=0,1,2,\dots,$ are mutually orthogonal in $L_2(\Bbb S^d)$, and
 $$\triangle_0\, P=-n(n+d-1)P,\ \ {\rm for\ all}\ P\in\mathcal{H}_n(\Bbb S^d),$$where $\triangle_0$ is the Laplace-Beltrami operator on the sphere $\Bbb
 S^d$.
Let $$\{Y_{nk}\equiv Y_{nk}^d:\, k=1,2,\dots,b_n^d\}$$ be a fixed
orthonormal basis for $\mathcal{H}_n(\Bbb S^d)$, where $b_n^d={\rm
dim}\,\mathcal{H}_n(\Bbb S^d)$. Then $$\{Y_{nk}:\,
k=1,2,\dots,b_n^d,\, n=0,1,2,\dots\}$$ is an orthonormal basis for
$L_2(\Bbb S^d)$.

  The orthogonal projector $H_n :\,L_2(\Bbb S^d)\rightarrow \mathcal{H}_n(\Bbb S^d)$ can be written
 as\begin{align*}
     H_n f(x)&=\sum\limits_{k=1}^{b_n^d}\langle f, Y_{nk}\rangle Y_{nk}(x)
     =\langle f,E_n( x,\cdot)\rangle,
   \end{align*}
 where $E_n( x, y)=\sum\limits_{k=1}^{b_n^d}Y_{nk}(x)Y_{nk}(y)$ is the reproducing kernel of $\mathcal{H}_n(\Bbb S^d)$.
See \cite{DaX} for more details.

 Given $r>0$, define the fractional power $(-\triangle_0)^{r/2}$ of the operator $-\triangle_0$ on $f$ by
$$(-\triangle_0)^{r/2}(f):=\sum\limits_{k=0}^\infty (k(k+d-1))^{r/2}H_kf,$$in the sense of distribution.
Using this operator we define the Sobolev space on $\Bbb S^d$ as follows: for $r>0$ and $1\le p\le\infty$,
$$W_{p}^r(\Bbb S^d):=\{f\in L_{p}(\Bbb S^d):\|f\|_{W_{p}^r}:=\|f\|_{p}+\|(-\triangle_0)^{r/2}(f)\|_{p}<\infty\},$$
while the Sobolev class $BW_{p}^r(\Bbb S^d)$ is defined to be the
unit ball of the Sobolev space $W_{p}^r(\Bbb S^d)$. We remark
that $W_{2}^r(\Bbb S^d)$ is just the Sobolev space $H^r(\Bbb S^d)$
given in \cite{G}, and if $r>d/p$, then $W_{p}^r(\Bbb S^d)$ is
compactly embedded into $C(\Bbb S^d)$.

Similar to the case on $\BB$, we give the definitions of
$L_{q}$-Marcinkiewicz-Zygmund family and the weighted least
$\ell_q$ approximation on $\Bbb S^d$ as follows.

\begin{defn}\label{def4.1}
Suppose that
$\mathcal{X}=\{\mathcal{X}_n\}=\{x_{n,k}:\,k=1,2,\dots,l_n,\,n=1,2,\dots\}$
is a doubly-indexed set of points in $\Bbb S^d$, and
 $\tau=\{\tau_n\}=\{\tau_{n,k}:\,k=1,2,\dots,l_n,\,n=1,2,\dots\}$ is a doubly-indexed set of positive numbers.  Then for $0< q<\infty$, the family $(\mathcal{X},\tau)$ is
 called an $L_{q}$-Marcinkiewicz-Zygmund family on $\Bbb S^d$, denoted by $L_{q}$-MZ, if there exist constants $A,\, B >0$ independent
of $n$ such that
\begin{equation}\label{4.1}
  A\|P\|_{q}^q\le\sum\limits_{k=1}^{l_n}|P(x_{n,k})|^q\tau_{n,k}\le B\|P\|_{q}^q,\ {\rm for\ all}\ P\in \Pi_n(\Bbb S^d).
\end{equation}

The ratio $\kappa=B/A$ is the global condition number of
$L_{q}$-MZ family $(\mathcal{X},\tau)$, and
$\mathcal{X}_n=\{x_{n,k}:k=1,2,\dots,l_n\}$ is the $n$-th layer of
$\mathcal{X}$. Similarly, we can define  $L_\infty$-MZ family.
\end{defn}

\begin{rem}\label{rem4.2} Similar to the case on $\BB$, we set
\begin{equation*}
  \mu_n:=\sum\limits_{k=1}^{l_n}\tau_{n,k}\delta_{x_{n,k}}.
\end{equation*}
For any $f\in C(\Bbb S^d)$,  we define for $0<q<\infty$
$$\|f\|_{[q]}:=\Big(\int_{\Bbb
S^d}|f(x)|^qd\mu_n(x)\Big)^{1/q}=\Big(\sum\limits_{k=1}^{l_n}|f(x_{n,k})|^q\tau_{n,k}\Big)^{1/q}.$$and
for  $q=\infty$,  $$\|f\|_{[\infty]}:=\max\limits_{1\le k\le
l_n}|f(x_{n,k})|.$$ It follows from \eqref{4.1} that the
$L_{q}$-norm of a polynomial of degree at most $n$ on $\Bbb S^d$
is comparable to the discrete version given by the weighted
$\ell_q$-norm of its restriction to $\mathcal{X}_n$.  It follows
from \cite{BD,DaX,MO,MNW} that such MZ families exist if the
families are dense enough.
\end{rem}

\begin{defn}\label{def4.3}
Let $0< q\le\infty$, and let $(\mathcal{X},\tau)$ be an $L_{q}$-MZ
family on $\Bbb S^d$. For $f\in C(\Bbb S^d)$, we define the
weighted least $\ell_q$ approximation on $\Bbb S^d$ by
\begin{align}\label{4.2}
         L_{n,q}^{\Bbb S}(f)&:=\arg\min\limits_{P\in\Pi_n(\Bbb S^d)}\,\Big(\sum\limits_{k=1}^{l_n}|f(x_{n,k})-P(x_{n,k})|^q\tau_{n,k}\Big)^{1/q}.
 \end{align}That is,  $L_{n,q}^{\Bbb S}(f)$ is any function in $\Pi_n(\Bbb S^d)$ satisfying
 $$\|f-L_{n,q}^{\Bbb S}(f)\|_{[q]}=\min_{P\in\Pi_n(\Bbb S^d)}\|f-P\|_{[q]}.$$
\end{defn}

\begin{rem}\label{rem4.4}
Similar to the case on $\BB$, for $f\in C(\Bbb S^d)$ and $0<q\le
\infty$, the minimizer $L_{n,q}^{\Bbb S}(f)$ exists. Hence, this
definition is well defined. If $1< q<\infty$, $L_{n,q}^{\Bbb
S}(f)$ is unique. If $q=2$, then $L_{n,2}^{\Bbb S}(f)$ is linear.
However, if $0<q\le1$ or $q=\infty$, then $L_{n,q}^{\Bbb S}(f)$
may be not unique, and if  $q\neq 2$, then the operator
$L_{n,q}^{\Bbb S}$ is not linear.
\end{rem}

 For $L_{q}$-MZ family on $\Bbb S^d$ with $q=2$,
 $L_{n,2}^{\Bbb S}$ is a bounded linear operator on
$C(\Bbb S^d)$ satisfying that $(L_{n,2}^{\Bbb S})^2=L_{n,2}^{\Bbb
S}$, and the range of $L_{n,2}^{\Bbb S}$ is $\Pi_n(\Bbb S^d)$. If
we define the discretized inner product on $C(\Bbb S^d)$ by
$$\langle
f,g\rangle_{[2]}:=\sum\limits_{k=1}^{l_n}\tau_{n,k}f(x_{n,k})g(x_{n,k}),$$
then $L_{n,2}^{\Bbb S}$ is just the orthogonal projection onto
$\Pi_n(\Bbb S^d)$ with respect to the discretized inner product
$\langle \cdot,\cdot\rangle_{[2]}$. Hence, we get for $f\in C(\Bbb
S^d)$,
\begin{equation*}
  L_{n,2}^{\Bbb S}(f)(x)=\langle f,
  D_n^{\Bbb S}(x,\cdot)\rangle_{[2]}=\sum_{k=1}^{l_n}\tau_{n,k}f(x_{n,k})D_n^{\Bbb S}(x,x_{n,k}),
\end{equation*}where $D_n^{\Bbb S}(x,y)$ is the reproducing kernel of $\Pi_n(\Bbb S^d)$ with respect to the discretized inner product $\langle\cdot,\cdot\rangle_{[2]}$.
We call  $L_{n,2}^{\Bbb S}(f)$  the weighted least squares
polynomial on $\Bbb S^d$, and $L_{n,2}^{\Bbb S}$ the weighted
least squares operator on $\Bbb S^d$.

Following Gr\"ochenig  in \cite{G},  for $L_{2}$-MZ family  on
$\Bbb S^d$ we can also use the frame theory to construct the
quadrature formula
$$I_n^{\Bbb S}(f)=\sum_{k=1}^{l_n}W_{n,k}f(x_{n,k}).$$ It was shown in
\cite{LW} that
$$W_{n,k}=\tau_{n,k}\int_{\Bbb S^d}D_n^{\Bbb S}(x,x_{n,k})d\sigma(x), $$and
\begin{equation}\label{4.3}I^{\Bbb S}_n(f)=\int_{\Bbb S^d}L_{n,2}^{\Bbb S}(f)(x)d\sigma(x).\end{equation}Such
quadrature $I_n^{\Bbb S}$ is called the  least squares  quadrature on $\Bbb S^d$.

Analogous  to the case on $\BB$,  we obtain the following two
theorems. The proofs are   similar to the ones of Theorems
\ref{thm1.6} and \ref{thm1.7}.

\begin{thm}\label{thm4.5}Let $1\le p\le\infty$ and $1\le q\le\infty$. Suppose that $(\mathcal{X},\tau)$ is an $L_{q}$-MZ family on $\Bbb S^d$
with global condition number $\kappa$, $L_{n,q}^{\Bbb S}$ is the
weighted least $\ell_q$ approximation defined by \eqref{4.2}. For
$f\in W_{p}^r(\Bbb S^d)$, $r>d\,\max\{1/p,1/q\}$, we have for
$1\le q<\infty$
\begin{equation}\label{42.1}
  \|f-L_{n,q}^{\Bbb S}(f)\|_{q}\le C(1+\kappa^{1/q})n^{-r+d(\frac1p-\frac1q)_+}\|f\|_{W_{p}^r},
\end{equation}and for $q=\infty$,
$$
  \|f-L_{n,\infty}^{\Bbb S}(f)\|_{\infty}\le C(1+\kappa)n^{-r+d/p}\|f\|_{W_{p}^r},
$$
where $C>0$ depends on $r$, $d$, $p,\ q$,  but not on $f$, $n$,
$\kappa$ or $(\mathcal{X},\tau)$.
\end{thm}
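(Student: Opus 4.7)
The plan is to mirror the structure of the proof of Lemma \ref{lem3.1}, replacing every ball-based ingredient with its spherical counterpart. The sphere analogs of all the prerequisites exist in the literature: a filtered operator $V_n^{\Bbb S}$ built from $\eta(k/n)H_k$ satisfying $V_n^{\Bbb S}(P)=P$ for $P\in\Pi_n(\Bbb S^d)$, $V_n^{\Bbb S}(f)\in\Pi_{2n-1}(\Bbb S^d)$, and $\|f-V_n^{\Bbb S}(f)\|_p\lesssim n^{-r}\|f\|_{W_p^r}$; the Nikolskii inequality $\|P\|_q\lesssim n^{d(1/p-1/q)_+}\|P\|_p$ for $P\in\Pi_n(\Bbb S^d)$; and the ``majorant'' estimate for the filtered kernel $E_{n,\eta}^{\Bbb S}$ on $\Bbb S^d$ yielding a bounded $L_1$ norm. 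Using these, I will define the spherical Littlewood--Paley blocks $\sigma_1^{\Bbb S}(f)=V_1^{\Bbb S}(f)$, $\sigma_j^{\Bbb S}(f)=V_{2^{j-1}}^{\Bbb S}(f)-V_{2^{j-2}}^{\Bbb S}(f)$ for $j\ge 2$, so that $\|\sigma_j^{\Bbb S}(f)\|_p\lesssim 2^{-jr}\|f\|_{W_p^r}$.

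For the main estimate, I will choose $s$ with $2^s\le n<2^{s+1}$ and split
\[
\|f-L_{n,q}^{\Bbb S}(f)\|_q\le \|f-V_{2^{s-1}}^{\Bbb S}(f)\|_q+\|L_{n,q}^{\Bbb S}(f)-V_{2^{s-1}}^{\Bbb S}(f)\|_q.
\]
The first term is handled exactly as in \eqref{3.4}: apply the spherical Nikolskii inequality to each $\sigma_j^{\Bbb S}(f)$ with $j\ge s+1$ and sum the geometric series, producing the bound $n^{-r+d(1/p-1/q)_+}\|f\|_{W_p^r}$. For the second term, since $L_{n,q}^{\Bbb S}(f)-V_{2^{s-1}}^{\Bbb S}(f)\in\Pi_n(\Bbb S^d)$, I first use Nikolskii to pass from $L_q$ to $L_q$-discretized norm via the lower MZ bound \eqref{4.1}, then invoke the minimizing property of $L_{n,q}^{\Bbb S}$ (the spherical analog of \eqref{1.3}) to replace $\|L_{n,q}^{\Bbb S}(f)-V_{2^{s-1}}^{\Bbb S}(f)\|_{[q]}$ by $2\|f-V_{2^{s-1}}^{\Bbb S}(f)\|_{[q]}\le 2\sum_{j\ge s+1}\|\sigma_j^{\Bbb S}(f)\|_{[q]}$.

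The decisive step is controlling $\|\sigma_j^{\Bbb S}(f)\|_{[q]}$ for $j\ge s+1$, i.e., estimating the discrete $\ell_q(\tau_n)$-norm of a spherical polynomial of degree $2^j\ge n$ using an $L_q$-MZ family indexed by $n$. This is precisely the spherical analog of Lemma \ref{lem2.2}: from the upper MZ bound one first deduces the regularity condition $\sum_{x_{n,k}\in B(y,1/n)}\tau_{n,k}\lesssim B\,\sigma(B(y,1/n))$ (as in Lemma \ref{lem2.1}), and then by the kernel-majorant argument of Lemma \ref{lem2.3} one obtains the polynomial-degree-dependent Marcinkiewicz inequality $\sum_k \tau_{n,k}|P(x_{n,k})|^q\lesssim B(m/n)^d\|P\|_q^q$ for $P\in\Pi_m(\Bbb S^d)$ with $m\ge n$. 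Combined with Nikolskii, this gives
\[
\|\sigma_j^{\Bbb S}(f)\|_{[q]}\lesssim B^{1/q}n^{-d/q}(2^j)^{-r+d[(1/p-1/q)_++1/q]}\|f\|_{W_p^r}.
\]
Summing over $j\ge s+1$ (which converges because $r>d\max\{1/p,1/q\}$) and combining with the $A^{-1/q}$ factor from the MZ bound gives the $\kappa^{1/q}n^{-r+d(1/p-1/q)_+}\|f\|_{W_p^r}$ contribution. Adding the two pieces yields \eqref{42.1}.

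For the case $q=\infty$, the filtered-approximation term already satisfies $\|f-V_{2^{s-1}}^{\Bbb S}(f)\|_\infty\lesssim n^{-r+d/p}\|f\|_{W_p^r}$ by Sobolev embedding (using $r>d/p$), and for the discrete part one uses the $L_\infty$-MZ lower bound $\|P\|_\infty\le A^{-1}\|P\|_{[\infty]}$ directly, without needing a Nikolskii step, so only a factor of $1+\kappa$ appears. The main obstacle is verifying that the kernel-majorant machinery (spherical analog of Lemma \ref{lem2.3}) is available with the sharp exponent $d$; this is standard on the sphere and has been used in the quoted works \cite{DaX,MNW,LW}, so the estimate carries over without difficulty.
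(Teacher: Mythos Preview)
Your proposal is correct and follows exactly the approach the paper intends: the paper states that the proof of Theorem~\ref{thm4.5} is ``similar to the ones of Theorems~\ref{thm1.6} and \ref{thm1.7}'' without further detail, and what you have written is precisely the spherical transcription of the proof of Lemma~\ref{lem3.1} (with $t=q$) together with the $q=\infty$ argument from Remark~3.2. The only minor quibble is phrasing---when you write ``use Nikolskii to pass from $L_q$ to $L_q$-discretized norm via the lower MZ bound'' you simply mean the lower MZ inequality \eqref{4.1}, not a Nikolskii inequality---but the mathematics is right.
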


\begin{thm}\label{thm4.6} Suppose that $(\mathcal{X},\tau)$
is an $L_{2}$-MZ family on $\Bbb S^d$ with global condition number
$\kappa$, $L_{n,2}^{\Bbb S}$ and $I_n^{\Bbb S}$ are the weighted
least squares approximation  and the  least squares quadrature,
respectively. If $f\in H^r(\Bbb S^d)\equiv W_2^r(\Bbb S^d)$,
$r>d/2$, then we have
\begin{equation}\label{4.9}
  \|f-L_{n,2}^{\Bbb S}(f)\|_{2}\le C(1+\kappa^{1/2})n^{-r}\|f\|_{H^r(\Bbb S^d)},
\end{equation}and
\begin{equation}\label{4.10}
  \Big|\int_{\Bbb S^d}f(x)d\sigma(x)-I_n^{\Bbb S}(f)\Big|\le C(1+\kappa^{1/2})n^{-r}\|f\|_{H^r(\Bbb S^d)},
\end{equation}
where $C>0$ depends on $r$, $d$, but not on $f$, $n$, $\kappa$
or $(\mathcal{X},\tau)$.
\end{thm}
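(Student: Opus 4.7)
The plan is to deduce Theorem~\ref{thm4.6} from Theorem~\ref{thm4.5} together with the quadrature identity \eqref{4.3}, in exact parallel with the deduction of Theorem~\ref{thm1.7} from Theorem~\ref{thm1.6} and \eqref{1.5} on the ball. For \eqref{4.9} I specialize Theorem~\ref{thm4.5} to $p=q=2$: the assumption $r>d\max\{1/p,1/q\}$ collapses to $r>d/2$, the exponent $(1/p-1/q)_+$ vanishes, and since $W_{2}^{r}(\Bbb S^d)=H^{r}(\Bbb S^d)$ the bound \eqref{42.1} reads exactly as \eqref{4.9}.

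For \eqref{4.10} I start from \eqref{4.3}, which says $I_{n}^{\Bbb S}(f)=\int_{\Bbb S^d}L_{n,2}^{\Bbb S}(f)(x)\,d\sigma(x)$, and subtract it from $\int_{\Bbb S^d} f\,d\sigma$. Using the Cauchy--Schwarz inequality together with the fact that $\sigma$ is a probability measure on $\Bbb S^d$, I obtain
$$\Big|\int_{\Bbb S^d}f\,d\sigma - I_n^{\Bbb S}(f)\Big|
= \Big|\int_{\Bbb S^d}\bigl(f-L_{n,2}^{\Bbb S}(f)\bigr)\,d\sigma\Big|
\le \|f-L_{n,2}^{\Bbb S}(f)\|_{1}
\le \|f-L_{n,2}^{\Bbb S}(f)\|_{2},$$
and then \eqref{4.9} immediately yields \eqref{4.10}.

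The substantive work therefore lies not in Theorem~\ref{thm4.6} itself but in its prerequisite Theorem~\ref{thm4.5}. To establish Theorem~\ref{thm4.5} one reruns the Littlewood--Paley argument of Lemma~\ref{lem3.1} on $\Bbb S^d$: introduce the spherical filtered operator $V_n^{\Bbb S}(f)=\sum_k\eta(k/n)H_k f$ with dyadic blocks $\sigma_j^{\Bbb S}(f)=V_{2^{j-1}}^{\Bbb S}(f)-V_{2^{j-2}}^{\Bbb S}(f)$, split $f-L_{n,2}^{\Bbb S}(f)=\bigl(f-V_{2^{s-1}}^{\Bbb S}(f)\bigr)+\bigl(V_{2^{s-1}}^{\Bbb S}(f)-L_{n,2}^{\Bbb S}(f)\bigr)$, bound the first summand by the standard spherical filtered--approximation estimate (the analogue of \eqref{2.6}), and bound the polynomial second summand by combining the MZ inequality \eqref{4.1}, the optimality of $L_{n,2}^{\Bbb S}$, the spherical Nikolskii inequality, and the spherical analogue of the maximal--Bernstein--type Lemma~\ref{lem2.2} applied to the blocks $\sigma_j^{\Bbb S}(f)$. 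The main nonroutine step is to set up the spherical counterparts of the kernel decay estimate \eqref{2.5} and of the regularity and maximal--Bernstein lemmas (Lemmas~\ref{lem2.1}--\ref{lem2.3}); once these classical tools for $\Bbb S^d$ are in place the transfer of the argument is mechanical, and Theorem~\ref{thm4.6} then follows by the short reduction above.
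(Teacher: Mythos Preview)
Your proposal is correct and mirrors the paper's approach exactly: the paper states that the proofs of Theorems~\ref{thm4.5} and~\ref{thm4.6} are ``similar to the ones of Theorems~\ref{thm1.6} and~\ref{thm1.7}'', and Theorem~\ref{thm1.7} is in turn said to follow ``from Theorem~\ref{thm1.6} and \eqref{1.5} immediately''. Your specialization of Theorem~\ref{thm4.5} to $p=q=2$ for \eqref{4.9}, followed by the use of \eqref{4.3} and Cauchy--Schwarz for \eqref{4.10}, together with your sketch of transferring the Lemma~\ref{lem3.1} argument to the sphere for Theorem~\ref{thm4.5}, is precisely this scheme.
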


Theorem \ref{thm4.5} is new, and Theorem \ref{thm4.6} is a slight
improvement of  \cite[Theorem 1.2]{LW}. Indeed,  we only reduce
dependence on the global condition number in \eqref{4.9} and
\eqref{4.10} by replacing the constant $(1+\kappa^2)^{1/2}$ in
\eqref{1.7-0} and \eqref{1.7-1} with the constant
$1+\kappa^{1/2}$.

\begin{rem}\label{rem4.8}
It follows from \cite{M,MO,MP} that there exist $L_{q}$-MZ
families on $\Bbb S^d$ with $l_n\asymp N\asymp n^d$. For such
$L_{q}$-MZ family, combining \eqref{42.1} with \cite[Theorem
1.2]{WW},  we obtain for $1\le p, q\le \infty$,
$r>d\,\max\{1/p,1/q\}$,
\begin{align*}\sup_{f\in BW_{p}^{r}(\Bbb S^d)}\|f-L_{n,q}^{\Bbb S}(f)\|_{q}\asymp N^{-\frac rd+(\frac1p-\frac1q)_+}\asymp g_N(BW_{p}^{r}(\Bbb S^d),L_{q}(\Bbb S^d)),\end{align*}
which implies that
 the weighted least $\ell_q$ approximation operators $L_{n,q}^{\Bbb S}$ are asymptotically optimal algorithms in the sense of optimal recovery for $1\le
p,q \le \infty$.

For the least squares quadrature rules $I_n^{\Bbb S}$, it follows from \eqref{4.10} and \cite{BH,H,WW} that for $r>d/2$,
$$ \sup\limits_{f\in BH^{r}(\Bbb S^d)}\Big|\int_{\Bbb S^d}f(x)d\sigma(x)-I_n^{\Bbb S}(f)\Big|\asymp N^{-\frac rd}\asymp e_N(BH^{r}(\Bbb S^d);{\rm INT}),$$which means that the  least squares quadrature rules $I_n^{\Bbb S}$ are the asymptotically optimal
quadrature formulas for $BH^{r}(\Bbb S^d)$.
\end{rem}

\vskip 3mm

\noindent{\bf Acknowledgment}  Jiansong Li and Heping Wang  were
supported by the National Natural Science Foundation of China
(Project no. 11671271).

\end{document}